\def\today{\number\day\space\ifcase\month\or   January\or February\or
   March\or April\or May\or June\or   July\or August\or September\or
   October\or November\or December\fi\   \number\year}
\theoremstyle{definition}
\newtheorem{thm}{Theorem}[section]
\newtheorem{lem}[thm]{Lemma}
\newtheorem{prp}[thm]{Proposition}
\newtheorem{dfn}[thm]{Definition}
\newtheorem{cor}[thm]{Corollary}
\newtheorem{rmk}[thm]{Remark}
\newtheorem{exa}[thm]{Example}
\newtheorem{qst}[thm]{Question}
\newtheorem{notation}[thm]{Notation}
\newcommand{\Z}{{\mathbb{Z}}}
\newcommand{\R}{{\mathbb{R}}}
\newcommand{\C}{{\mathbb{C}}}
\newcommand{\N}{{\mathbb{N}}}
\newcommand{\D}{{\mathbb{D}}}
\newcommand{\T}{{\mathbb{T}}}
\newcommand{\ltsr}{{\mathrm{ltsr}}}
\newcommand{\rtsr}{{\mathrm{rtsr}}}
\newcommand{\tsr}{{\mathrm{tsr}}}
\title[Stable rank for inclusions of Banach algebras]{Stable rank for inclusions of Banach algebras\\ \vspace{1cm}
\fontseries{m}\selectfont{\tiny Dedicated to celebrate the 90th birthday of \\ Professor Jun Tomiyama}}
\author{Masaru Nagisa}
\address{Department of Mathematics and Informatics \\Chiba University \\
Yayoi 1-33, Chiba, 263-8522, Japan}
\email[]{nagisa@math.s.chiba-u.ac.jp}
\author{Hiroyuki Osaka}
\address{ Department of Mathematical Sciences\\
  Ritsumeikan University\\ Kusatsu, Shiga, 525-8577  Japan}
\email{osaka@se.ritsumei.ac.jp}
\thanks{$^*$Research of the second author partially supported by the JSPS grant for Scientific Research No.20K03644}
\author{Raisei Tomita}
\address{ Atsugi commercial High School\\
Atsugi,Kanagawa, 243-0817 Japan}
\email[]{rtomita1129@gmail.com}
\email{}
\date{}
\keywords{Topological stable rank, Inclusion of Banach algebras}
\subjclass[2020]{Primary 46L55; Secondary 46L35.}
\begin{document}

\begin{abstract}
We give a formula for the stable rank of inclusions of unital Banach algebras in the sense of finite Watatani index. As an application we show that  the stable rank of $\ell^1$-algebras of Disk algebras by any action of finite groups is 2.
\end{abstract}

\maketitle

\section{Introduction}

The notion of left topological stable rank (resp. right topological stable rank) for a Banach algebra $A$, denoted by $\mathrm{ltsr}(A)$ (resp. $\mathrm{rtsr}(A)$), was introduced by Rieffel which generalizes the concept of dimension of a topological space \cite{Rieffel 1982}. He presented the basic properties and stability theorem related to K-theory for C*-algebras. Several questions in \cite{Rieffel 1982} has been solved through these 30 years. For example there is a Banach algebra $A$ such that $\mathrm{ltsr}(A) \not= \mathrm{rtsr}(A)$ in \cite{DMLR 2008}.

The Disk algebra $A(\mathbb{D})$ is a typical example in commutative Banach algebras which does not have the involution. It is easy to show that $\mathrm{ltsr}(A(\mathbb{D})) = 2$ and it is deduced that any automorphism $\alpha$ on $A(\mathbb{D})$ can be induced from the holomorphic function $\varphi$ on the Disk $\mathbb{D}$ defined by $\alpha(f)(z) (= \alpha_\varphi(f)(z)) = f(\varphi(z))$ for any $z \in \mathbb{D}$. Therefore, it is interesting to study the dynamical system $(G, A(\mathbb{D}), \alpha_\varphi)$ for a discrete group $G$.

In this note we investigate the inclusion $1 \in A \subset B$ of unital Banach algebras of index-finite type in the sense of Watatani \cite{Wata 1990},  and give several formulas between  $\mathrm{ltsr}(B)$ (resp. $\mathrm{rtsr}(B)$) and $\mathrm{ltsr}(A)$ (resp. $\mathrm{rtsr}(A)$). In particular, $\ell^1$-algebra $\ell^1(G, A, \alpha)$ of a unital Banach algebra $A$ by any action $\alpha$ from a finite group $G$ satisfies 
$\mathrm{ltsr}(\ell^1(G, A, \alpha)) \leq \mathrm{ltsr}(A) + |G| -1$. Moreover, we show that $\mathrm{ltsr}(\ell^1(G, A(\mathbb{D}), \alpha)) = 2$ for any action $\alpha$ from a finite group $G$.

\section{Main result}

Let $1 \in A \subset B$ be an inclusion of unital Banach algebras, and there is an $A$-bimodule map $E\colon B \rightarrow A$ of index-finite type in the sense of Watatani \cite{Wata 1990}. In this section we give a formula for topological stable rank of an inclusion of unital Banach algebras which is the generalized one in the case of an inclusion of unital C*-algebras in \cite{JOPT 2009}.

\vskip 2mm 

\begin{notation}
For any ring $A$ with identity element we let $Lg_n(A)$ $($resp. $Rg_n(A))$ denote the set of $n$-tuples of elements of $A$ which generate $A$ as a left (resp. right) ideal,
\end{notation}

\vskip 2mm

\begin{dfn}
Let $A$ be a unital Banach algebra. By the left (resp. right) topological stable rank of $A$, denote $\mathrm{ltsr}(A)$ (resp. $\mathrm{rtsr}(A)$), we mean the least integer $n$ such that $\mathrm{ltsr}(A)$ (resp. $\mathrm{rtsr}(A)$) is dense in $A^n$. If no such integer $n$ exists we set $\mathrm{ltsr}(A) = \infty$ (resp. $\mathrm{rtsr}(A) = \infty$). If $A$ does not have the identity element, then its topological stable ranks are defined to be those for the Banach algebra ${\tilde A}$ obtained from $A$ by adjoining an identity element.
\end{dfn}

\vskip 2mm

\begin{rmk}
When $A$ has a Banach algebra with a continuous involution, $\mathrm{ltsr}(A) = \mathrm{rtsr}(A)$. However, in general $\mathrm{lstr}(A) \not= \mathrm{rtsr}(A)$. For example, when $\mathcal{N}$ is an atomic nest which is order isomorphic to $\N$ with finite dimensional atoms $E_k$ of rank $n_k$ satisfying $n_k \geq 4\sum_{i<k}n_i$. Then $\mathrm{ltsr}(\mathcal{T}(\mathcal{N})) = \infty$ and $\mathrm{rtsr}(\mathcal{T}(\mathcal{N})) = 2$ \cite[Theorem~2.1]{DMLR 2008}.
\end{rmk}



\vskip 2mm

\begin{rmk}
Let $M_n(\C)$ be an $n\times n$ matrix algebra, $\{E_{ij}\}$ a canonical matrix unit for $M_n(\C)$, and $(A, \|\cdot\|)$ a unital Banach algebra.
Then $(A\otimes M_n(\C), \|\cdot \|_1)$ becomes a Banach algebra, where the norm $\| \cdot \|_1$ is defined as follows:
\[       \| \sum_{i,j=1}^n a_{ij}\otimes E_{ij} \|_1 = \sum_{i,j=1}^n \|a_{ij}\| , \qquad a_{ij}\in A \; (1\le i,j \le n).  \]
\end{rmk}

\vskip 2mm





\vskip2mm

The following Lemma implies that when we consider Banach algebra tensor products of Banach algebras, we have only to consider the $\ell^1$-norm $\|\quad\|_1$.

\vskip 2mm

\begin{lem}\label{lem:subcross norm}
Let $\alpha$ be a norm on $A\otimes M_n(\C)$ and  $(A\otimes M_n(\C), \alpha)$  a unital Banach algebra.
Then the following are equivalent:
\begin{enumerate}
\item[$(1)$] There exists a positive number $C$ such that
\[   \alpha(a\otimes E_{11}) \le C \|a\| \qquad \text{for any } a\in A., \]
\item[$(2)$] The Banach algebra $(A\otimes M_n(\C), \|\cdot\|_1)$ is isomorphic to $(A\otimes M_n(C), \alpha)$ as a Banach algebra.
\item[$(3)$] There exist positive numbers $K$ and $L$ such that
\[  K \max_{1\le i,j \le n} \|a_{ij}\| \le \alpha\left(\sum_{i,j=1}^n a_{ij}\otimes E_{ij}\right) \le L \sum_{i,j=1}^n \|a_{ij}\| \text{ for any } a_{ij}\in A.  \] 
\end{enumerate}
\end{lem}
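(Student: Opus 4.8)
The plan is to prove the cycle of implications $(2)\Rightarrow(3)\Rightarrow(1)\Rightarrow(2)$, using the open mapping theorem as the main engine.

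First, for $(2)\Rightarrow(3)$: if $\Phi\colon (A\otimes M_n(\C),\|\cdot\|_1)\to(A\otimes M_n(\C),\alpha)$ is a Banach algebra isomorphism, then by the open mapping theorem $\Phi$ is bounded with bounded inverse, so there are constants $c,d>0$ with $c\,\|x\|_1\le\alpha(\Phi(x))\le d\,\|x\|_1$. However, one must be careful: the statement of $(3)$ compares $\alpha$ and $\|\cdot\|_1$ on the \emph{same} elements, not through $\Phi$. The right reading — and I would make this explicit — is that $(2)$ should be taken to mean that the \emph{identity map} is an isomorphism, i.e.\ $\alpha$ and $\|\cdot\|_1$ are equivalent norms; then $(2)\Leftrightarrow(3)$ is essentially the definition of equivalence of norms together with the elementary inequality $\max_{i,j}\|a_{ij}\|\le\|\sum a_{ij}\otimes E_{ij}\|_1\le n^2\max_{i,j}\|a_{ij}\|$, which lets one pass freely between the $\ell^1$-sum and the max. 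So $(2)\Rightarrow(3)$ is immediate and $(3)\Rightarrow(1)$ is trivial by taking $a_{11}=a$ and all other $a_{ij}=0$, giving $\alpha(a\otimes E_{11})\le L\|a\|$, i.e.\ $C=L$.

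The substantive implication is $(1)\Rightarrow(2)$, equivalently $(1)\Rightarrow(3)$. Here I would exploit the multiplicative structure. Given $(1)$, first bound $\alpha(a\otimes E_{ij})$ for all $i,j$: write $a\otimes E_{ij}=(1\otimes E_{i1})(a\otimes E_{11})(1\otimes E_{1j})$, so $\alpha(a\otimes E_{ij})\le \alpha(1\otimes E_{i1})\,\alpha(a\otimes E_{11})\,\alpha(1\otimes E_{1j})\le C'\|a\|$ with $C'=C\max_{i,j}\alpha(1\otimes E_{i1})\alpha(1\otimes E_{1j})$ (a finite quantity since it is a max over finitely many fixed elements). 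Then by the triangle inequality $\alpha(\sum_{i,j}a_{ij}\otimes E_{ij})\le C'\sum_{i,j}\|a_{ij}\|$, which is the upper bound in $(3)$ with $L=C'$. For the lower bound: note that $a_{ij}\otimes E_{ij} = (1\otimes E_{ii})\bigl(\sum_{k,l}a_{kl}\otimes E_{kl}\bigr)(1\otimes E_{jj})$, so $\alpha(a_{ij}\otimes E_{ij})\le \alpha(1\otimes E_{ii})\,\alpha(\sum_{k,l}a_{kl}\otimes E_{kl})\,\alpha(1\otimes E_{jj})$. Combining this with a lower bound $\|a_{ij}\|\le C''\alpha(a_{ij}\otimes E_{ij})$ — which itself follows by the same compression-by-matrix-units trick applied to recover $\alpha(a_{ij}\otimes E_{11})$ from $\alpha(a_{ij}\otimes E_{ij})$ and then using that on $A\otimes E_{11}\cong A$ the norm $\alpha$ dominates a multiple of $\|\cdot\|$ (this is where one needs that $\alpha$ restricted to the closed subalgebra $A\otimes E_{11}$ is a Banach algebra norm, hence comparable to $\|\cdot\|$ by the open mapping theorem applied to the natural map $(A,\|\cdot\|)\to(A\otimes E_{11},\alpha)$) — yields $K\max_{i,j}\|a_{ij}\|\le\alpha(\sum_{i,j}a_{ij}\otimes E_{ij})$.

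The main obstacle I anticipate is the lower-bound direction, and specifically justifying that $\alpha$ restricted to $A\otimes E_{11}$ is comparable to $\|\cdot\|_A$: this is not automatic from $(1)$ alone (which only gives one inequality) and requires invoking completeness — one observes that $A\otimes E_{11}$ is a closed subalgebra of the Banach algebra $(A\otimes M_n(\C),\alpha)$ (closedness because $A\otimes E_{11}=(1\otimes E_{11})(A\otimes M_n(\C))(1\otimes E_{11})$ is the range of a bounded idempotent-like compression), hence itself complete in $\alpha$, and the identification with $(A,\|\cdot\|)$ is a continuous bijective algebra homomorphism in the direction given by $(1)$, so the open mapping theorem supplies the reverse bound. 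Once that single comparison is in hand, everything else is bookkeeping with the finitely many constants $\alpha(1\otimes E_{ij})$. I would also remark that the equivalence shows the Banach algebra structure on $A\otimes M_n(\C)$ is, up to isomorphism, independent of the admissible cross-norm chosen, which is the point of the sentence preceding the lemma.
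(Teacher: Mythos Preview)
Your argument is correct and uses the same two ingredients as the paper --- the factorization $a\otimes E_{ij}=(1\otimes E_{i1})(a\otimes E_{11})(1\otimes E_{1j})$ and the open mapping theorem --- and your reading of $(2)$ as ``the identity map is a homeomorphism'' matches the paper's. The only difference is in how you close the implication $(1)\Rightarrow(2)$: once you have the upper bound $\alpha(\,\cdot\,)\le L\|\cdot\|_1$, the paper simply observes that the identity $(A\otimes M_n(\C),\|\cdot\|_1)\to(A\otimes M_n(\C),\alpha)$ is a continuous bijection between Banach spaces and invokes open mapping on the whole matrix algebra to obtain the reverse inequality at one stroke. You instead route the lower bound through the corner $A\otimes E_{11}$, applying open mapping there and then propagating via compressions by the $1\otimes E_{ij}$. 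This works, but it is a detour: the global application of open mapping makes your entire ``main obstacle'' paragraph unnecessary.
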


\begin{proof}
$(1)\Rightarrow (2)$ Set $C= L \max\{ \alpha(1\otimes E_{i1}) \alpha(1\otimes E_{1j}) ; 1\le i,j \le n \}$.
For any $a\in A$, we have
\[   \alpha(a\otimes E_{ij}) = \alpha( (1\otimes E_{i1})(a\otimes E_{11})(1\otimes E_{1j}) \le C \|a\| . \]
Since $\alpha\left(\sum_{i,j=1}^n a_{ij}\otimes E_{ij}\right) \le C \sum_{ij=1}^n \|a_{ij}\| = C\| \sum_{i,j=1}^n a_{ij}\otimes E_{ij} \|_1$,
the mapping ${\rm id}: (A\otimes M_n(\C), \|\cdot\|_1) \longrightarrow (A\otimes M_n(\C), \alpha)$ is continuous.
Since $ (A\otimes M_n(\C), \alpha)$ is complete, this inverse is continuous by the open mapping theorem.
So $(A\otimes M_n(\C), \|\cdot\|_1)$ is isomorphic to $ (A\otimes M_n(\C), \alpha)$.

$(2)\Rightarrow (3)$
Since the norm $\alpha$ is equivalent to the norm $\|\cdot\|_1$, there exist positive numbers $K$ and $L$ such that
\[  K \|\sum_{i,j=1}^n a_{ij}\otimes E_{ij}\|_1 \le \alpha\left(\sum_{i,j=1}^n a_{ij}\otimes E_{ij}\right) \le L \| \sum_{i,j=1}^n a_{ij}\otimes E_{ij}\|_1.  \] 
Using the relation
\[  \max_{1\le i,j \le n}\|a_{ij}\| \le \sum_{i,j=1}^n \|a_{ij}\| \le n^2 \max_{1\le i,j \le n}\|a_{ij}\|,  \]
we have 
\[  K \max_{1\le i,j \le n} \|a_{ij}\| \le \alpha\left(\sum_{i,j=1}^n a_{ij}\otimes E_{ij}\right) \le L \sum_{i,j=1}^n \|a_{ij}\|.  \] 

$(3)\Rightarrow (1)$
It is clear. 
\end{proof}



\vskip 2mm

\begin{dfn}
Let $A$ be a unital Banach algebra. We say that $A$ satisfies Condition $L_n(k)$ if the left-invertible $(n + k) \times k$ matrices are dense in $M_{(n+k)\times k}(A)$.
\end{dfn}

\vskip 2mm



\begin{lem}\label{lem:L_n}
Let $A$ be a unital Banach algebra. If $A$ satisfies $L_n(1)$, then it satisfies $L_n(k)$ for all $k \geq 1$.
\end{lem}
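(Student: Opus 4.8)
The plan is to induct on $k$, using the hypothesis $L_n(1)$ as the base case and showing how to pass from an $(n+k)\times k$ situation to an $(n+k+1)\times(k+1)$ situation. Recall that a matrix $X\in M_{(n+k)\times k}(A)$ is left-invertible precisely when its $k$ columns, viewed as a $k$-tuple of elements of the free right module $A^{n+k}$, admit a left inverse, i.e. there is $Y\in M_{k\times(n+k)}(A)$ with $YX=1_k$; equivalently the columns generate $A^{n+k}$ as a right submodule in the appropriate sense. The key point is that density of left-invertibles is stable under the operation of adjoining one more column and one more row, provided we can perturb the last column into "general position" relative to the cokernel of the first $k$ columns.

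The main steps I would carry out are as follows. First, fix $X\in M_{(n+k+1)\times(k+1)}(A)$ and $\varepsilon>0$; write $X=\begin{pmatrix} X' & v\end{pmatrix}$ where $X'$ consists of the first $k$ columns (an $(n+k+1)\times k$ matrix) and $v$ is the last column in $A^{n+k+1}$. Think of $X'$ as $(n+(k+1))\times k$; since $n+k+1 = n'+k$ with... no — rather, I view $X'$ as an $(n+1+k)\times k$ matrix, so by the inductive hypothesis applied with the first parameter equal to $n+1$, i.e. using $L_{n+1}(k)$ — but wait, I only know $L_n$. Here is the correct reduction: by induction $A$ satisfies $L_n(k)$, and trivially $L_n(k)$ implies $L_{n-1}(k)$ is false in general, so instead I should argue directly. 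Perturb $X'$ (an $(n+k+1)\times k$ matrix, i.e. $(n+1)+k$ rows) to a left-invertible matrix $\widetilde{X'}$ within $\varepsilon/2$ — this is legitimate because $L_n(k)$, which holds by the inductive hypothesis, asserts density of left-invertible $(n+k)\times k$ matrices, and an $(n+k+1)\times k$ matrix has \emph{more} rows, so left-invertibility is only easier: explicitly, if $Z$ is the $(n+k)\times k$ matrix obtained by deleting the last row of $X'$, approximate $Z$ by a left-invertible $\widetilde Z$, and then $\widetilde{X'}$ := ($\widetilde Z$ on top of the original last row of $X'$) is left-invertible with the same left inverse padded by a zero column.

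Second, with $\widetilde{X'}$ left-invertible, choose $W$ with $W\widetilde{X'}=1_k$; then $P := 1_{n+k+1}-\widetilde{X'}W$ is an idempotent in $M_{n+k+1}(A)$ whose "range" complements the columns of $\widetilde{X'}$, and the column space of $\begin{pmatrix}\widetilde{X'} & v\end{pmatrix}$ is left-invertible iff $Pv$, as an element of the module $PA^{n+k+1}$, can be completed — the cleanest route is: $\begin{pmatrix}\widetilde{X'}&v\end{pmatrix}$ is left-invertible iff $\begin{pmatrix}\widetilde{X'}& Pv\end{pmatrix}$ is, and the latter decouples, so it suffices to make $Pv$ left-invertible as a single column in the image of $P$. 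Now $P$ has rank $n+1$ in the sense that $PA^{n+k+1}\cong A^{n+1}$ after a perturbation (idempotents over a Banach algebra close to a conjugate of the standard one); composing with this identification turns the problem into: approximate $Pv\in A^{n+1}$ by a left-invertible column, i.e. by an element of $Lg_1$ of $\ldots$ — this is exactly an instance of $L_n(1)$, which is our standing hypothesis (a single column with $n+1 = n+1$ rows is left-invertible iff it lies in the dense set guaranteed by $L_n(1)$, since $L_n(1)$ concerns $(n+1)\times 1$ matrices). Perturbing $Pv$ by $\varepsilon/2$ to a left-invertible column and pulling back gives the desired left-invertible $(n+k+1)\times(k+1)$ matrix within $\varepsilon$ of $X$.

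The main obstacle I anticipate is the bookkeeping in the second step: making precise the identification $PA^{n+k+1}\cong A^{n+1}$ so that "left-invertible column in the image of $P$" really does correspond to an $(n+1)\times 1$ left-invertible matrix, and controlling the norms of the perturbations through this identification (the idempotent $P$ and the conjugating invertible depend on $\widetilde{X'}$, hence on $\varepsilon$, so one must fix $\widetilde{X'}$ first and only then choose how good the second approximation must be). A cleaner alternative that avoids the module identification is the standard matrix manipulation used for $C^*$-algebras in \cite{Rieffel 1982}: use elementary row/column operations over $A$ (which preserve left-invertibility and can be taken norm-small) to bring $\begin{pmatrix}\widetilde{X'}&v\end{pmatrix}$ into the block form $\begin{pmatrix}1_k & 0\\ 0 & u\end{pmatrix}$ with $u$ an $(n+1)\times 1$ column, reducing directly to $L_n(1)$; I would likely present it this way to keep the estimates transparent.
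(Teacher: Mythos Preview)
Your plan—induct on $k$ and reduce via row operations—matches the paper's, but you apply the inductive hypothesis and $L_n(1)$ in the reverse order, and this creates a real gap. You first use $L_n(k)$ to make the first $k$ columns $\widetilde{X'}$ left-invertible, and then want to bring $(\widetilde{X'}\mid v)$ to the block form $\begin{pmatrix}1_k&0\\0&u\end{pmatrix}$ with $u\in A^{n+1}$. But over a general Banach algebra a left-invertible $(n{+}k{+}1)\times k$ matrix need \emph{not} be completable to an invertible square matrix; equivalently, the complementary idempotent $P=1-\widetilde{X'}W$ need not have range isomorphic to $A^{n+1}$ (its range is only \emph{stably} free). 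Your parenthetical that $P$ is ``close to a conjugate of the standard one'' is unsupported—nothing in the construction places $P$ near any standard idempotent—so the identification $PA^{n+k+1}\cong A^{n+1}$ and hence the reduction to $L_n(1)$ are unjustified. (A side remark: elementary operations are not ``norm-small'' perturbations; they are invertible transformations one conjugates by, with the final estimate controlled through $\|R^{-1}\|$ as in the paper.)

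The paper avoids this by applying $L_n(1)$ \emph{first}: given $T\in M_{(n+k)\times k}(A)$, perturb only the bottom $n{+}1$ entries of the first column to a tuple $(c_1,\dots,c_{n+1})\in Lg_{n+1}(A)$. Because one can solve $\sum d_ic_i=1-t_{11}$, explicit elementary row operations $R$ then bring the matrix to $\begin{pmatrix}1&*\\0&Q\end{pmatrix}$ with $Q\in M_{(n+k-1)\times(k-1)}(A)$, after which the inductive hypothesis $L_n(k-1)$ is applied to $Q$ and $R^{-1}$ undoes the operations. The point is that a \emph{single} unimodular column segment is always reducible by elementary row operations (only left-ideal generation is needed), whereas reducing $k$ left-invertible columns simultaneously demands exactly the completability property you do not have. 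Your approach can be repaired by strengthening the inductive statement to ``matrices of the form $U\bigl(\begin{smallmatrix}1_k\\0\end{smallmatrix}\bigr)$ with $U$ a product of elementary matrices are dense in $M_{(n+k)\times k}(A)$''—the paper's argument in fact yields this along the way—but as written the reduction step is incomplete.
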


\begin{proof}
The proof is based on \cite[6.3 Lemma]{Rieffel 1982}. We argue by induction on $k$. The assumption holds for the case in which $k=1$, so we suppose that $A$ satisfies condition $L_n(k-1)$, where $k \geq 2$. Let $T=(t_{ij})$ be any element of $M_{(n+k)\times k}(A)$, and let $\varepsilon >0$. According to $L_n(1)$ there exists $(c_i) \in Lg_{n+1}(A)$ such that $\|^t (t_{k1}, t_{(k+1)1}, \dots , t_{(n+k)1}) - (c_i)\| < \varepsilon/2$.

 Set 
\begin{align*}
T' = \begin{pmatrix}
t_{11} & t_{12} & \ldots & t_{1k} \\
\vdots & \vdots &          & \vdots \\
t_{(k-1)1} & t_{(k-1)2} & \ldots & t_{(k-1)k} \\
c_1 & t_{k2} & \ldots & t_{kk} \\
\vdots & \vdots        &     & \vdots \\
c_{(n+1)} & t_{(n+k)2} & \ldots & t_{(n+k)k}
\end{pmatrix} 
\in M_{(n+k)\times k}(A).
\end{align*}


We define a matrix $E_{ij}(a)=(f_{kl})\in M_{n+k}(A)$ for $a\in A$ and  $1\le i \neq j \le N+k$ as follows:
\[   f_{ij} = a, \; f_{kl}=\begin{cases} 1  &  \text{ if } k= l \\
                                              0  & \text{ if } k\neq l \text{ and } (k,l)\neq (i,j) \end{cases}.  \]
Then $E_{ij}(a)^{-1}=E_{ij}(-a)$.
We can choose $d_1,d_2,\ldots d_{n+1} \in A$ such that $\sum_{i=1}^{n+1} d_ic_i =1-t_{11}$ and set
\[  R = (\prod_{l=k}^{n+k} E_{1l} (c_{l-k+1}) ) (\prod_{l=2}^{k-2} E_{1l} (-t_{l1}) ) (\prod_{j=1}^{n+1} E_{1,k-1+j} (d_j) ) . \] 
Then $S=RT'$ has the following form:
\[      S= \begin{pmatrix}  1 & s_{12}  & \ldots & s_{1k} \\
                                  0 &    &          &   \\
                           \vdots &    &  Q      &   \\
                                  0 &    &          &   \end{pmatrix} \in M_{(n+k)\times k}(A) ,  \]              
where $Q\in M_{(n+k-1)\times (k-1)}(A)$.
By the induction hypothesis there is a $Q' \in M_{(n+k-1)\times (k-1)}(A)$ such that $Q'$ is left-invertible and $\|Q-Q'\| < \frac{\varepsilon}{2\|R^{-1}\|}$.




 Set
\begin{align*}
S' = \begin{pmatrix}
1 & s_{12} & \ldots & s_{1k} \\
0 &         &        &         \\
\vdots &    & \Huge{Q'} &  \\
0 &         &        &
\end{pmatrix}
\in M_{(n+k)\times k}(A).
\end{align*}
Since $S'$ is left-invertible, so is $R^{-1}S' \in M_{(n+k)\times k}(A)$. Since $\|T-T'\| < \varepsilon/2$, $\|S-S'\| < \frac{\varepsilon}{2\|R^{-1}\|}$ and $T'=R^{-1}S$, we have $\|R^{-1}S'-T\| \leq \|R^{-1}(S'-S)\|+\|T'-T\| < \varepsilon/2 + \varepsilon/2 = \varepsilon$.
\end{proof}

\vskip 2mm


\begin{dfn}
Let $A$ be a unital Banach algebra and let $p$ and $q$ be idempotents in $A$. 
\begin{enumerate}
\item
We define $p \leq q$ if there is an idempotent $r \in A$ such that $p + r = q$ and $pr = 0 = rp$.
It is obvious that this is equivalent to that $pq = p = qp$.
\item
We call $p$ is equivalent to $q$ if there are elements $x \in pAq$ and $y \in qAp$ such that $xy = p$ and $yx = q$. In this case we write $p \sim q$.
\item
We call $p$ is  subequivalent to $q$ if there is an idempotent $q' \in A$ such that $q' \leq q$ and $p \sim q'$. In this case we write $p \preceq q$. 
\end{enumerate}
\end{dfn}


\vskip 2mm

\begin{rmk}
When $A$  is a unital C*-algebra, then equivalence of two projections $p, q$ in $A$ is the same as the Murray-von Neumann equivalence. Therefore, we can modify \cite[Proposition 3.2, Proposition 4.1, and Proposition 4.2]{blackadar 2004} in the framework of idempotents.
\end{rmk}

\vskip 2mm

Recall that an idempotent $p$ in $A$ is said to be full if it is not contained in any proper closed, two-sided ideal of $A$.

\vskip 2mm

\begin{prp}
Let $A$ be a unital Banach algebra and $p$ a full idempotent in $A$. Then $\ltsr(A) \leq \ltsr(pAp)$.
\end{prp}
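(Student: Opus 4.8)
The plan is to reduce the problem to a full corner via the standard Morita-type argument, realizing $pAp$ as a corner of a matrix algebra over $A$ and then transferring density of left-invertible tuples back to $A$. Concretely, since $p$ is a full idempotent, there exist finitely many elements $x_1,\dots,x_m \in Ap$ and $y_1,\dots,y_m \in pA$ with $\sum_{i=1}^m y_i x_i = 1$ (fullness gives $1$ in the closed ideal generated by $p$; a routine approximation plus the fact that invertibles are open lets us arrange an exact such relation). This exhibits $1_A$ as dominated, in the Murray--von Neumann sense for idempotents developed above, by a sum of copies of $p$: the idempotent $1_A \in A$ is subequivalent to $p \otimes 1_{M_m} \in M_m(A)$. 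Equivalently, writing $e = p\otimes 1_m \in M_m(A)$, we have $A \cong f M_m(A) f$ for an idempotent $f \leq e$, while $e M_m(A) e \cong M_m(pAp)$.

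Next I would invoke the two stability facts for $\ltsr$ under passing to corners and to matrix algebras over $pAp$, which are exactly of the form proved via Lemma~\ref{lem:L_n} and Lemma~\ref{lem:subcross norm}. First, $\ltsr(M_m(pAp)) \le \ltsr(pAp)$: indeed the usual Rieffel-type inequality $\ltsr(M_m(C)) = \lceil (\ltsr(C)-1)/m \rceil + 1 \le \ltsr(C)$ holds here because the relevant density statement is precisely Condition $L_{n}(k)$, and Lemma~\ref{lem:L_n} upgrades $L_n(1)$ to $L_n(k)$ for all $k$; Lemma~\ref{lem:subcross norm} guarantees the norm on $M_m(pAp)$ may be taken to be $\|\cdot\|_1$, so all these manipulations are legitimate in the Banach-algebra category. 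Second, for a full idempotent $e$ in a unital Banach algebra $R$ one has $\ltsr(eRe) \ge \ltsr$(of any corner $fRf$ with $f \preceq e$ full), and in particular $\ltsr(fM_m(A)f) \le \ltsr(eM_m(A)e)$ when $f \le e$; applying this with $R = M_m(A)$, $e = p\otimes 1_m$, $f$ the copy of $1_A$, gives $\ltsr(A) = \ltsr(fM_m(A)f) \le \ltsr(eM_m(A)e) = \ltsr(M_m(pAp)) \le \ltsr(pAp)$, which is the claim.

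The main obstacle is the corner inequality $\ltsr(fRf) \le \ltsr(eRe)$ for idempotents $f \le e$ with $f$ full in $R$ — this is where one must actually approximate: given a left-invertible-generating $n$-tuple target in $M_{(n+k)\times k}(fRf)$, one must produce nearby left-invertible matrices, and the construction of the required left inverses (clearing denominators, using that the set of left-invertibles is open, and that $\|\cdot\|_1$ on matrix algebras is submultiplicative up to constants) must be done carefully so that the perturbations stay inside $fRf$ and small. This is the Banach-algebra analogue of \cite[Proposition 3.2]{blackadar 2004} / the C*-arguments in \cite{JOPT 2009}, and the idempotent (rather than projection) bookkeeping — keeping track of $\|f\|$, $\|e\|$ and the similarity implementing $f \sim$ a sub-idempotent of $e$ — is the genuinely technical point. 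Everything else is a formal concatenation of the lemmas already established.
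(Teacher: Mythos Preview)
Your argument has a genuine gap: the ``corner inequality'' you state and rely on is in the wrong direction, and is in fact false.  You assert that for idempotents $f \preceq e$ with $f$ full in $R$ one has $\ltsr(fRf) \le \ltsr(eRe)$, and then specialize to $R = M_m(A)$, $e = p\otimes 1_m$, $f \sim 1_A\otimes E_{11}$.  But take $R = M_2(B)$ for any unital Banach algebra $B$ with $\ltsr(B) \ge 3$, $e = 1_R$, $f = E_{11}$: both are full, $f \le e$, yet $\ltsr(fRf) = \ltsr(B) \ge 3$ while $\ltsr(eRe) = \ltsr(M_2(B)) = \lceil(\ltsr(B)-1)/2\rceil + 1 < \ltsr(B)$.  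Indeed the Proposition you are trying to prove, applied to the unital algebra $eRe$ with the full idempotent $f$, yields exactly the \emph{reverse} inequality $\ltsr(eRe) \le \ltsr(fRf)$.  So your chain $\ltsr(A) = \ltsr(fM_m(A)f) \le \ltsr(eM_m(A)e) = \ltsr(M_m(pAp)) \le \ltsr(pAp)$ breaks at the first inequality, and no rearrangement of these ingredients recovers the desired conclusion: from $A \cong qM_m(pAp)q$ with $q$ full you only get $\ltsr(M_m(pAp)) \le \ltsr(A)$, which is useless here.  (There is also a minor slip: with $x_i\in Ap$, $y_i\in pA$ one needs $\sum_i x_i y_i = 1$, not $\sum_i y_i x_i = 1$.)

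The paper simply invokes Blackadar's argument for \cite[Theorem~4.5]{blackadar 2004}, whose substance is the modification of \cite[Propositions~3.2, 4.1, 4.2]{blackadar 2004} to idempotents in Banach algebras (as noted in the preceding Remark).  The point is that the passage from $\ltsr(pAp) \le N$ to $\ltsr(A) \le N$ is carried out \emph{directly} via the $L_{N-1}(k)$ conditions: using $1 = \sum_j x_j y_j$ one rewrites an $N$-tuple in $A$ through a rectangular matrix with entries $y_k a_i x_l \in pAp$, perturbs that matrix to a left-invertible one using $L_{N-1}(m)$ for $pAp$, and then pushes back to $A$ via the $x_j, y_j$.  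This is the analogue of the proof of Theorem~\ref{thm:basic}, not a Morita reduction to a corner-in-matrix comparison; your outline replaces that direct approximation by a structural inequality that simply does not hold.
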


\begin{proof}
It follows from the same argument in \cite[Theorem~4.5]{blackadar 2004}.
\end{proof}

\vskip 2mm

\begin{thm}\label{thm:basic}
Let $1 \in A \subset B$ be an inclusion of unital Banach algebras. Suppose that there is an $A$-bimodule  map $E\colon B \rightarrow A$ of index-finite type in the sense of Watatani \cite{Wata 1990}. That is, there exists a quasi-basis $\{(u_i, v_i)\}_{i=1}^n$ in $B \times B$ for $E$ 
for some $n \in \N$ such that for all $b \in B$, $\sum_{i=1}^nu_iE(v_ib) = b = \sum_{i=1}^nE(bu_i)v_i$. 
Then, $\ltsr(B) \leq \ltsr(A) + n-1$ and $\rtsr(B) \leq \rtsr(A) + n-1$. 
\end{thm}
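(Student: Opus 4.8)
The plan is to realize $B$ as a complemented left $A$-submodule of a matrix algebra over $A$ and then transport density statements through that embedding, using Lemma~\ref{lem:L_n} to reduce everything to Condition $L_{m}(1)$ where $m=\ltsr(A)-1$. More precisely, I would first use the quasi-basis $\{(u_i,v_i)\}_{i=1}^{n}$ to build the standard Watatani embedding: the map $\lambda\colon B\to M_n(A)$ given by $\lambda(b)_{ij}=E(u_i b u_j)$ (or a variant adapted to the bimodule structure) is a unital algebra homomorphism, and the relations $\sum_i u_iE(v_ib)=b=\sum_i E(bu_i)v_i$ show that $b\mapsto (E(v_1 b),\dots,E(v_n b))$ and its partner exhibit $\lambda(B)$ as a corner, in the sense that there is an idempotent $e\in M_n(A)$ with $e\lambda(B)e=\lambda(B)$ and $\lambda(B)$ sitting inside $M_n(A)$ compatibly with the $A$-bimodule maps. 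Since $E$ has index-finite type, $e$ is a full idempotent in $M_n(A)$, so by the Proposition above $\ltsr(M_n(A))\le\ltsr(\lambda(B)A\lambda(B)\text{-corner})$; but I will actually want the inequality the other way, so instead I would argue directly with left-ideal generators.

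The core computation: to show $\ltsr(B)\le \ltsr(A)+n-1=:N$, I must show $Lg_N(B)$ is dense in $B^N$. Given $(b_1,\dots,b_N)\in B^N$ and $\varepsilon>0$, I apply the conditional expectation coordinatewise. Using $b=\sum_i u_i E(v_i b)$, each $b_k$ is an explicit $A$-linear combination of the fixed elements $u_1,\dots,u_n$ with coefficients $E(v_i b_k)\in A$. Assemble these coefficients into an $n\times N$ matrix over $A$ — equivalently view the tuple as living in $M_{n\times N}(A)$ after the substitution, pad to a matrix of the right shape, and invoke that $A$ satisfies $L_{m}(1)$ (where $m=\ltsr(A)-1$), hence by Lemma~\ref{lem:L_n} satisfies $L_{m}(k)$ for every $k$, to approximate by a left-invertible rectangular matrix over $A$. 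Pulling the left-invertibility back through $b=\sum_i u_i E(v_i b)$ and through the second quasi-basis relation produces elements $c_1,\dots,c_N\in B$ with $\|b_k-c_k\|<\varepsilon$ and with $(c_1,\dots,c_N)$ generating $B$ as a left ideal — the "extra" $n-1$ coordinates absorb the passage from $A$-coefficients back to the $n$ generators $u_i$. The $\rtsr$ statement is the mirror image, using $b=\sum_i E(bu_i)v_i$ and right-ideal generators, with $M_{n\times N}(A)$ replaced by its transpose and $L_n(k)$ by the right-handed analogue (which follows by the same induction as Lemma~\ref{lem:L_n}).

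The step I expect to be the main obstacle is the bookkeeping that converts "left-invertible rectangular matrix over $A$ approximating the coefficient matrix" into "$N$-tuple generating $B$ as a left ideal", i.e.\ verifying that the Watatani relations really do let one read off a left quasi-inverse in $B$ from a left quasi-inverse in $M_\bullet(A)$, and that the index-finite hypothesis is exactly what guarantees the relevant idempotent is full so that no ideal obstruction appears. One has to be careful that the norm on $B$ is controlled by the matrix norm $\|\cdot\|_1$ under the embedding — here Lemma~\ref{lem:subcross norm} is what licenses replacing any ambient Banach-algebra norm on $M_n(A)$ by the $\ell^1$-norm, so the approximation estimates transfer without loss. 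Once these two points are in place, the induction bookkeeping is routine and parallels \cite[Theorem~4.5]{blackadar 2004} and the C*-case in \cite{JOPT 2009}.
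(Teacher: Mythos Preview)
Your core strategy---express each $b_j$ through the quasi-basis, organise the $A$-coefficients into a rectangular matrix, approximate by a left-invertible matrix using $L_m(n)$, and push the left inverse back to $B$---is exactly the paper's approach. But two concrete points in your execution are off.

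First, you chose the wrong quasi-basis relation for the $\ltsr$ statement. Writing $b_k=\sum_i u_i\,E(v_i b_k)$ places the $A$-coefficients on the \emph{right} of the generators $u_i$, so the resulting coefficient array is $n\times N$, not $(m+n)\times n$; ``padding'' does not repair this, and in a noncommutative ring you cannot simply transpose. The paper instead uses
\[
b_j=\sum_{k=1}^{n} E(b_j u_k)\,v_k,
\]
which puts the $A$-coefficients on the \emph{left}. This yields an $(m+n)\times n$ matrix $a=(E(b_j u_k))$ with $av=(b_1,\dots,b_{m+n})^{T}$, where $v=(v_1,\dots,v_n)^{T}$---precisely the shape to which Condition $L_m(n)$ applies. (Your relation is the correct one for the $\rtsr$ half.)

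Second, the step you flag as ``the main obstacle'' is in fact a one-line algebraic trick, and it needs neither fullness of an idempotent nor Lemma~\ref{lem:subcross norm}. Once $x\approx a$ is left-invertible with $zx=1_n$ and $y:=xv$, one has $zy=v$; evaluating the second quasi-basis relation at $b=1$ gives $\sum_k E(u_k)v_k=1$, i.e.\ the row $(E(u_1),\dots,E(u_n))$ times $v$ equals $1$. Hence
\[
(E(u_1),\dots,E(u_n))\,z\,y \;=\; (E(u_1),\dots,E(u_n))\,v \;=\; 1,
\]
so $y\in Lg_{m+n}(B)$. The norm estimate $\|y-b\|=\|(x-a)v\|\le\|x-a\|\,\|v\|$ is elementary. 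The Watatani embedding $\lambda$, full-corner arguments, and the norm-comparison Lemma~\ref{lem:subcross norm} are not used here; they enter only in the converse-type inequality (Corollary~2.16), where the basic construction is genuinely needed.
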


\begin{proof}
Set $m= \ltsr(A) - 1$. We prove that $Lg_{m+n}(B)$ is dense in $B^{m+n}$. Let $b_1,\dots ,b_{m+n} \in B$, and $\varepsilon > 0$. Write
\begin{align*}
b_j = \sum_{k=1}^n E(b_ju_k)v_k 
\end{align*}
for $1 \leq j \leq m+n$.

 Set
\begin{align*} 
a = \begin{pmatrix}
E(b_1u_1) & E(b_1u_2) & \ldots & E(b_1u_n) \\
E(b_2u_1) & E(b_2u_2) & \ldots & E(b_2u_n) \\
\vdots & \vdots &        & \vdots \\
E(b_{m+n}u_1) & E(b_{m+n}u_2) & \ldots & E(b_{m+n}u_n)
\end{pmatrix}
\quad \text{and} \quad
v = \begin{pmatrix}
v_1 \\
v_2 \\
\vdots \\
v_n
\end{pmatrix}.
\end{align*}
This gives
\begin{align*}
av = \begin{pmatrix}
b_1 \\
b_2 \\
\vdots \\
b_{m+n}
\end{pmatrix}.
\end{align*}

According to Lemma \ref{lem:L_n}, the Banach algebra A satisfies the property $L_m(n)$, so that there exists 
\begin{align*}
x = \begin{pmatrix}
x_{11} & x_{12} & \ldots & x_{1n} \\
x_{21} & x_{22} & \ldots & x_{2n} \\
\vdots  & \vdots  &        & \vdots \\
x_{m+n1} & x_{m+n2} & \ldots & x_{m+nn} \\
\end{pmatrix}
\end{align*}
with $x_{jk} \in A$ for $1 \leq j \leq m+n$ and $1 \leq k \leq n$, such that $\|x-a\| < \varepsilon / \|v\|$ and $x$ is left-invertible. That is, there is a $n \times (m+n)$ matrix $z$ with entries in A, such that $zx = 1_n$.

Define
\begin{align*}
y_j = \sum_{k=1}^n x_{j,k}v_k \in B
\end{align*}
for $1 \leq j \leq m+n$. Then
\begin{align*}
xv = \begin{pmatrix}
y_1 \\
y_2 \\
\vdots \\
y_{m+n}
\end{pmatrix}.
\end{align*}
Setting $y = xv$, we see that $v = (zx)v = z(xv) = zy$. Hence 
$(E(u_1) \dots E(u_n))zy = (E(u_1) \dots E(u_n))v = \sum_{k=1}^n E(u_k)v_k = 1$. Therefore $y \in Lg_{m+n}(B)$. We have $\|xv-av\| \leq \|x-a\|\|v\| < \varepsilon$, and this proves that $Lg_{m+n}(B)$ is dense in $B^{m+n}$.
\end{proof}

\vskip 2mm

\begin{exa}\label{exa:ell}
Let $G$ be a discrete group, $A$ be a unital Banach algebra, and $\alpha:A\longrightarrow A$ be a bounded  
algebra isomorphism of $A$ with bounded inverse $\alpha^{-1}$.
We denote $\alpha \in {\rm Aut}(A)$.
We call an action $\alpha : G\ni g \mapsto \alpha^g \in {\rm Aut}(A)$ of $G$ on $A$ if  $\alpha^g \in {\rm Aut}(A)$ satisfies
\[   \sup_{g\in G} \|\alpha^g\|<\infty, \; \alpha^e={\rm id}, \text{ and } \alpha^{gh}=\alpha^g \alpha^h \ \hbox{for any}\ g, h \in G,\]
where $e$ is a unit of $G$.

We denote $f\in \ell^1(G,A, \alpha)$ if $f:G\longrightarrow A$ and
\[   \sum_{g\in G} \|f(g)\| <\infty.  \]
For any $f_1,f_2\in \ell^1(G,A,\alpha)$ we define the product $f_1*f_2$ and the norm $\|f_1\|$ as follows:
\[  f_1*f_2(g) = \sum_{h\in G} f_1(h)\alpha^h(f_2(h^{-1}g)), \; \|f_1\|=\sum_{g\in G} \|f_1(g)\|.  \]
Then $\ell^1(G,A,\alpha)$ becomes unital Banach algebra.
The element $f\in \ell^1(G, A,\alpha)$ can be represented as follows:
\[  f=\sum_{g\in G} f_g \delta^g, \text{ and } \|f\| = \sum_{g\in G} \|f_g\| , \]
where $f_g=f(g)\in A$,  $\delta_g \in \ell^1(G,A,\alpha)$ satisfies
\[   (f_g \delta^g)*(\tilde{f}_h\delta^h) = f_g \alpha^g(\tilde{f}_h)\delta^{gh} \quad (f,\tilde{f}\in \ell^1(G,A,\alpha)) ,   \]
and $\delta^e$ is the unit of $\ell^1(G,A,\alpha)$.
We can see $A (\cong A\delta^e)$ as a unital Banach subalgebra of $\ell^1(G,A,\alpha)$.

\vspace{5mm}

%

We define a bounded linear map $E:\ell^1(G,A, \alpha) \longrightarrow \ell^1(G,A,\alpha)$ as follows:
\[   E(f) = E\left(\sum_{g\in G}f_g \delta^g\right) = f_e\delta^e  .\]
Then, for any $a\in A(\cong A\delta^e)$, we have
\[   E^2(f)=E(f),  \; E(af)=aE(f), \; \text{ and } E(fa)=E(f)a  . \]
Set $u_g=\delta^g$ and $v_g=\delta^{g^{-1}}$ for $g\in G$.

\begin{lem}\label{lem:quasi-basis}
For any $b= \sum_{g\in G} b_g \delta^g$ with $| \{g\in G: b_g\neq 0\} | <\infty$, we have
\[  b = \sum_{g\in G}u_g *E(v_g * b) = \sum_{g\in G}E(b * u_g) * v_g. \]
In particular, if $|G|$ is finite, we have $\sum_{g\in G} u_g * v_g =|G| \delta^e$. Here, $|A|$ denotes the cardinal number of a set $A$.
\end{lem}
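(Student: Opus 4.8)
The plan is to verify both identities by a direct computation with the twisted convolution product, handling first an arbitrary finitely supported $b = \sum_{g} b_g \delta^g$ and then specializing. For the left-hand identity I would fix $g \in G$ and compute, using the multiplication rule $(a\delta^g)*(c\delta^h) = a\,\alpha^g(c)\,\delta^{gh}$, that $v_g * b = \delta^{g^{-1}} * \sum_{h} b_h \delta^h = \sum_h \alpha^{g^{-1}}(b_h)\,\delta^{g^{-1}h}$. Applying $E$, which by definition retains only the $\delta^e$-component, kills every summand except the one with $g^{-1}h = e$, i.e. $h = g$, so $E(v_g * b) = \alpha^{g^{-1}}(b_g)\,\delta^e$. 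Left-multiplying by $u_g = \delta^g$ then gives $u_g * E(v_g * b) = \alpha^g\!\big(\alpha^{g^{-1}}(b_g)\big)\,\delta^g = b_g\,\delta^g$, where I use $\alpha^g\alpha^{g^{-1}} = \alpha^{e} = \mathrm{id}$. Summing over $g$ — a finite sum, since $b$ is finitely supported — yields $\sum_{g} u_g * E(v_g * b) = \sum_{g} b_g \delta^g = b$.

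The right-hand identity is entirely symmetric. I would compute $b * u_g = \big(\sum_h b_h \delta^h\big) * \delta^g = \sum_h b_h\,\delta^{hg}$, where $\alpha^h(1) = 1$ because each $\alpha^h$ is a unital algebra automorphism; applying $E$ isolates the term with $hg = e$, i.e. $h = g^{-1}$, so $E(b * u_g) = b_{g^{-1}}\,\delta^e$, and right-multiplying by $v_g = \delta^{g^{-1}}$ gives $E(b * u_g) * v_g = b_{g^{-1}}\,\delta^{g^{-1}}$. Summing over $g$ and reindexing through the bijection $g \mapsto g^{-1}$ of $G$ gives $\sum_{g} b_{g^{-1}}\,\delta^{g^{-1}} = \sum_{h} b_h \delta^h = b$, which completes the two quasi-basis relations.

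For the final assertion, when $G$ is finite every element of $\ell^1(G,A,\alpha)$ is finitely supported, so the two identities above hold for all $b$. To obtain $\sum_{g} u_g * v_g = |G|\,\delta^e$ directly, I would note $u_g * v_g = \delta^g * \delta^{g^{-1}} = \alpha^g(1)\,\delta^{gg^{-1}} = \delta^e$, once more using unitality of $\alpha^g$, and then sum the $|G|$ equal terms.

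There is no genuine obstacle here; the proof is bookkeeping with the twisted convolution and the group law. The only points that deserve explicit mention are that each $\alpha^g$ fixes the unit of $A$ (so that the $\delta^g$ behave multiplicatively and $\alpha^h(1)=1$), the cancellation $\alpha^g\alpha^{g^{-1}} = \mathrm{id}$, the reindexing $g \leftrightarrow g^{-1}$ in the right-hand relation and in the last sum, and the finite-support hypothesis, which guarantees that every sum appearing is finite so that no convergence issue arises.
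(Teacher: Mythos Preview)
Your proof is correct and follows essentially the same direct computation as the paper: compute $v_g * b$, apply $E$ to extract the $\delta^e$-coefficient, then multiply by $u_g$ and sum. The paper only writes out the left-hand identity and leaves the right-hand identity and the index computation implicit, so your version is in fact more complete.
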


\begin{proof}
\begin{align*}
   \sum_{g\in G} u_g * E(v_g*b) & = \sum_{g\in G}\delta^g* E\left(\delta^{g^{-1}}* \sum_{h\in G} b_h \delta^h\right)
                = \sum_{g\in G}\delta^g * E\left(\sum_{h\in G} \alpha^{g^{-1}}(b_h) \delta^{g^{-1}h}\right)  \\        
        & =  \sum_{g\in G} \delta^{g} * \alpha^{g^{-1}}(b_g) \delta^{e} =\sum_{g\in G} b_g \delta^g =b .
\end{align*}
\end{proof}

If $G$ is a finite group, then we know that an inclusion $A \subset \ell^1(G, A, \alpha)$ is of index-finite type in the sense of Watatani from Lemma~\ref{lem:quasi-basis}. Indeed, $\{(u_g, v_g)\}_{g\in G}$ is  a quasi-basis for $E$ and index of $E$ is $\mathrm{Index}E = \sum_{g\in G}{\delta^g}^{-1}\delta^g = |G|\delta^e$. Note that $E$ is contractive.
From Theorem~\ref{thm:basic} 
$$
\ltsr(\ell^1(G, A, \alpha)) \leq \ltsr(A) + |G|-1.
$$ 

Moreover, if $G$ is a finite group ($|G|=n$),
we define a linear operator $e_{g,h}$ on $\C^N\cong \oplus_{g\in G} \C g$ as follows:
\[   e_{g,h} k = \begin{cases} g \quad & (h=k) \\ 0 & (h\neq k) \end{cases},  \]
where  $g,h,k\in G$.
Then $\{e_{g,h} : g,h\in G\}$ generates the matrix algebra $M_n(\C)$.
We can define an algebra-isomorphism as follows:
\[  \pi:\ell^1(G,A,\alpha) \ni f = \sum_{g\in G} f_g\delta^g \mapsto 
         \pi(f) = \sum_{h,g\in G} \alpha^h(f_g)\otimes e_{h,hg}\in A\otimes M_n(\C).  \]
Using the correspondence, we can see $\ell^1(G, A, \alpha)$ as a closed subalgebra of $A\otimes M_n(\C)$.
\end{exa}

\vskip 2mm

\begin{lem}\label{lem:tsr matrix}
Let $A$ be a unital Banach algebra and $m\in \N$.
For the Banach algebra $(M_{m}(A)(=A\otimes M_{m}(\C)), \|\cdot\|_1)$, 
\[   \ltsr(M_m(A)) = \left\{\frac{\ltsr(A) -1}{m}\right\} + 1,  \]
where $\{\quad \}$ denotes `the least integer greater than'. The same estimate holds for $\rtsr$. 

\end{lem}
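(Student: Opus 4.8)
The plan is to prove this by establishing the two inequalities
\[
\ltsr(M_m(A)) \le \left\{\tfrac{\ltsr(A)-1}{m}\right\}+1
\quad\text{and}\quad
\ltsr(M_m(A)) \ge \left\{\tfrac{\ltsr(A)-1}{m}\right\}+1,
\]
following the C*-algebra argument of Rieffel (\cite{Rieffel 1982}, around 6.1--6.5), but carried out purely in terms of left-invertibility of rectangular matrices, so that no involution is needed. The key translation device is the identity $\Bsr$-style reformulation: a tuple in $M_m(A)^k$ is, up to the Banach-algebra isomorphism of Lemma~\ref{lem:subcross norm}, nothing but an $(mk)\times m$ matrix over $A$, and generating $M_m(A)$ as a left ideal corresponds exactly to that $A$-matrix being right-invertible (equivalently, after transposing the roles, to an $m\times(mk)$ matrix being left-invertible). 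Thus $\ltsr(M_m(A)) \le k$ iff the left-invertible $m \times (mk)$ matrices over $A$ are dense in $M_{m\times mk}(A)$, which by the Condition $L_n$ language is precisely ``$A$ satisfies $L_{mk-m}(m)$''. By Lemma~\ref{lem:L_n} this holds as soon as $A$ satisfies $L_{mk-m}(1)$, i.e. as soon as $\ltsr(A) \le mk-m+1$.

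So the heart of the matter is the elementary arithmetic: one wants the least $k$ with $mk-m+1 \ge \ltsr(A)$, that is $k \ge (\ltsr(A)-1)/m + 1$, and since $k$ must be an integer this least value is $\bigl\{\tfrac{\ltsr(A)-1}{m}\bigr\}+1$ with $\{\,\cdot\,\}$ the ceiling as defined. First I would record carefully the equivalence ``$\ltsr(B)\le k$ $\iff$ $Lg_k(B)$ dense in $B^k$'' and spell out the matrix correspondence $B^k \cong M_{mk\times m}(A)$ together with the fact that $(b_1,\dots,b_k)\in Lg_k(M_m(A))$ translates into right-invertibility of the associated $A$-matrix; this uses only that $E_{ij}$ matrix units let one shuffle entries, exactly as in Lemma~\ref{lem:subcross norm}. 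Then the upper bound follows from Lemma~\ref{lem:L_n} as above, and the lower bound is the converse implication: if $\ltsr(M_m(A)) \le k$ then the relevant rectangular matrices are dense, hence $A$ satisfies $L_{mk-m}(1)$, hence $\ltsr(A)\le mk-m+1$, forcing $k \ge \bigl\{\tfrac{\ltsr(A)-1}{m}\bigr\}+1$.

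The main obstacle I anticipate is purely bookkeeping rather than conceptual: making the dictionary between ``$k$-tuples generating $M_m(A)$ as a left ideal'' and ``right-invertible $mk\times m$ matrices over $A$'' precise and symmetric, and in particular checking that density is preserved in both directions under the norm $\|\cdot\|_1$ (this is where Lemma~\ref{lem:subcross norm} is invoked, to know the norm on $M_m(A)^k \cong M_{mk\times m}(A)$ is equivalent to the entrywise $\ell^1$ norm, so that approximation entrywise over $A$ is the same as approximation in $M_m(A)^k$). One has to be a little careful that left-invertibility of an $m\times(mk)$ matrix over $A$ and right-invertibility of the transpose-shaped $mk\times m$ matrix are genuinely the same statement in a noncommutative setting; the cleanest route is to phrase everything from the start in terms of the left-ideal-generation property of the tuple and never actually transpose, using instead that a tuple $(b_1,\ldots,b_k)$ generates $M_m(A)$ as a left ideal iff there exist $c_1,\ldots,c_k\in M_m(A)$ with $\sum c_j b_j = 1_m$, and then unpacking this matrix equation over $A$. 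For the $\rtsr$ statement everything is symmetric: replace left ideals by right ideals and $L_n(k)$ by its right-handed analogue (Lemma~\ref{lem:L_n} has an obvious right-handed version with the same proof), so I would simply remark that the argument applies verbatim.
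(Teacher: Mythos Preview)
Your proposal is correct and is exactly the paper's approach: the paper's proof is the single line ``It follows the same step in \cite[6.1 Theorem]{Rieffel 1982}'', and you have outlined precisely that argument in the one-sided Banach-algebra setting. The only point to watch is your lower bound: density of left-invertible $mk\times m$ matrices over $A$ gives $L_{mk-m}(m)$, not $L_{mk-m}(1)$, and Lemma~\ref{lem:L_n} only supplies the forward implication $L_n(1)\Rightarrow L_n(k)$; the reverse $L_n(m)\Rightarrow L_n(1)$ is easy (embed an $(n{+}1)$-column as $\mathrm{diag}(1_{m-1},\text{column})$, approximate by a left-invertible $(n{+}m)\times m$ matrix, then use invertibility of the perturbed upper-left corner to row-reduce and extract a left-invertible last column), but you should state it rather than write ``hence''.
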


\vskip 2mm

\begin{proof}
It follows the same step in \cite[6.1 Theorem]{Rieffel 1982}.
\end{proof}

\vskip 2mm

\begin{cor}\label{cor:matrix}
Let $B$ be a unital Banach algebra. Suppose that there exists a matrix units $\{e_{ij}\}_{i,j=1}^n$ in $B$. Then
\begin{enumerate}
\item There exists an algebra-isomorphism $\rho\colon B \rightarrow (e_{11}Be_{11})\otimes M_n(\C)$,
\item
$\tsr(B) = \left\{\frac{\tsr(e_{11}Be_{11}) -1}{n}\right\}+1$. 
Here a matrix units $\{e_{ij}\}$ in $B$ means that $\sum_{i=1}^ne_{ii} = 1$ and $e_{ij}e_{kl} = \delta_{jk}e_{il}$ for $1 \leq i, j, k, l \leq n$.
\end{enumerate}
\end{cor}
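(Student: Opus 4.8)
The plan is to reduce the corollary entirely to results already available in the excerpt, namely Remark on the tensor norm $\|\cdot\|_1$, Lemma~\ref{lem:subcross norm}, and Lemma~\ref{lem:tsr matrix}. For part (1), I would first observe that $e_{11}Be_{11}$ is a unital Banach algebra with identity $e_{11}$, and define $\rho\colon B \to (e_{11}Be_{11})\otimes M_n(\C)$ by
\[
\rho(b) = \sum_{i,j=1}^n (e_{1i} b e_{j1}) \otimes E_{ij}.
\]
The matrix-unit relations $e_{ij}e_{kl} = \delta_{jk}e_{il}$ and $\sum_i e_{ii} = 1$ make the verification that $\rho$ is a unital algebra homomorphism a routine computation, and its inverse is visibly $\sum_{i,j} a_{ij}\otimes E_{ij} \mapsto \sum_{i,j} e_{i1} a_{ij} e_{1j}$, so $\rho$ is an algebra isomorphism. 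The one genuine point to check is that $\rho$ is a \emph{topological} isomorphism, i.e.\ a homeomorphism: boundedness of $\rho$ and of $\rho^{-1}$ follows because each map $b \mapsto e_{1i} b e_{j1}$ (resp.\ $a_{ij} \mapsto e_{i1} a_{ij} e_{1j}$) is bounded, being a composition of left/right multiplications by fixed elements of $B$; hence there are constants with $C^{-1}\|b\| \le \|\rho(b)\|_1 \le C\|b\|$ up to the equivalence of norms. Strictly speaking one should transport the Banach-algebra norm of $B$ across $\rho$ and then invoke Lemma~\ref{lem:subcross norm} to replace it by $\|\cdot\|_1$: the transported norm $\alpha$ on $(e_{11}Be_{11})\otimes M_n(\C)$ satisfies $\alpha(a\otimes E_{11}) = \|\rho^{-1}(a\otimes E_{11})\| = \|e_{11} a e_{11}\| = \|a\|$ for $a \in e_{11}Be_{11}$, so condition (1) of Lemma~\ref{lem:subcross norm} holds with $C=1$, and therefore $\alpha$ is equivalent to $\|\cdot\|_1$.

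For part (2), once (1) is established together with the norm identification, we have a Banach-algebra isomorphism $B \cong \big((e_{11}Be_{11})\otimes M_n(\C), \|\cdot\|_1\big)$. Since $\ltsr$ and $\rtsr$ are invariants of a unital Banach algebra up to topological isomorphism — density of $Lg_k$ (resp.\ $Rg_k$) is preserved under a bicontinuous algebra isomorphism, because such a map carries left-invertible tuples to left-invertible tuples and is a homeomorphism on $k$-tuples — we get $\tsr(B) = \tsr\big((e_{11}Be_{11})\otimes M_n(\C), \|\cdot\|_1\big)$ for $\tsr \in \{\ltsr,\rtsr\}$. Now apply Lemma~\ref{lem:tsr matrix} with $A$ replaced by $e_{11}Be_{11}$ and $m = n$ to conclude
\[
\tsr(B) = \left\{\frac{\tsr(e_{11}Be_{11}) - 1}{n}\right\} + 1.
\]

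I do not expect any serious obstacle here; the corollary is essentially a packaging of Lemmas~\ref{lem:subcross norm} and \ref{lem:tsr matrix}. The only place that requires a little care — and the step I would write out most explicitly — is checking that the algebraic isomorphism $\rho$ is automatically bicontinuous and that its pulled-back norm satisfies the hypothesis of Lemma~\ref{lem:subcross norm}, so that the switch to the $\ell^1$-norm is legitimate and Lemma~\ref{lem:tsr matrix} genuinely applies. Everything else is the standard matrix-unit bookkeeping and the elementary fact that topological stable rank is a topological-isomorphism invariant.
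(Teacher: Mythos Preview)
Your proposal is correct and follows essentially the same route as the paper: define $\rho(b)=\sum_{i,j} e_{1i}b e_{j1}\otimes E_{ij}$ with inverse $\sum_{i,j} a_{ij}\otimes E_{ij}\mapsto \sum_{i,j} e_{i1}a_{ij}e_{1j}$, transport the norm of $B$ to a norm $\alpha$ on $(e_{11}Be_{11})\otimes M_n(\C)$, observe $\alpha(a\otimes E_{11})=\|a\|$, invoke Lemma~\ref{lem:subcross norm} to pass to $\|\cdot\|_1$, and finish with Lemma~\ref{lem:tsr matrix}. Your write-up is in fact slightly more explicit than the paper's (you spell out bicontinuity and the invariance of $\tsr$ under topological isomorphism), but the argument is the same.
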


\begin{proof}

(1) 
Define $\rho\colon B \rightarrow (e_{11}Be_{11})\otimes M_n(\C)$ by
$\rho(b) = \sum_{i,j=1}^n e_{1i}be_{j1} \otimes E_{ij}$. 
Then, it is obvious that $\rho$ is an isomorphism from the direct calculus. 

(2)
Define a norm $\alpha$ on $(e_{11}Be_{11})\otimes M_n(\C)$ by 
\[  \alpha\left(\sum_{i,j=1}^n b_{ij}\otimes E_{ij}\right)= \| \rho^{-1} (\sum_{i,j=1}^n b_{ij} \otimes E_{ij}) \|
               = \| \sum_{i,j=1}^n e_{i1} b_{ij} e_{1j} \|,    \]
where $b_{ij} \in e_{11}Be_{11}$ $(i,j=1,2,\ldots, n)$.       
Then $\alpha(b\otimes E_{11}) = \|e_{11}be_{11} \| = \|b\|$ for $b\in e_{11}Be_{11}$.
By Lemma~\ref{lem:subcross norm} $(e_{11}Be_{11}\otimes M_n(\C), \alpha)$ is isomorphic to $(e_{11}Be_{11}\otimes M_n(\C), \|\cdot\|_1)$ as a Banach algebra.
So we have that 
$\tsr(B) = \left\{\frac{\tsr(e_{11}Be_{11}) -1}{n}\right\}+1$ by Lemma \ref{lem:tsr matrix}.
\end{proof}

\vskip2mm

\begin{cor}
Under the same assumption in Theorem~\ref{thm:basic} suppose that 
\begin{enumerate}
\item[$1)$] an $\mathrm{Index}E = \sum_{i=1}^nu_iv_i$ is invertible, 
\item[$2)$] $E$ is contractive, 
\item[$3)$] a quasi-basis $\{(u_i, v_i)\}_{i=1}^n$ is orthogonal with respect to $E $, i.e., $E(u_iv_i) = \delta_{i,j}$ for $1 \leq i, j \leq n$.
\end{enumerate}
Then 

$$
\ltsr(A) \leq n(\ltsr(B)) + n^2 - n + 1.
$$

Moreover, $\ltsr(A)$ is finite if and only if $\ltsr(B)$ is finite. The same is true for rtsr.
\end{cor}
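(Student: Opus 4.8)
The plan is to realize $A$ as a corner of a matrix algebra over $B$ and then apply Corollary~\ref{cor:matrix} together with Theorem~\ref{thm:basic}. Under hypotheses $1)$--$3)$, the elements $e_{ij} := u_i E(v_j \,\cdot\,)$ — more precisely, viewed inside the larger algebra $M_n(B)$ or inside $B$ itself when the quasi-basis is orthogonal — should behave like a system of matrix units. First I would verify that setting $e_{ij}$ appropriately (using the orthogonality relation $E(u_iv_j)=\delta_{ij}$ and the invertibility of $\mathrm{Index}\,E=\sum_i u_iv_i$) yields idempotents and the relations $e_{ij}e_{kl}=\delta_{jk}e_{il}$ with $\sum_i e_{ii}=1$ in a suitable unital Banach algebra $C$ built from $B$; the natural candidate is the ``basic construction'' algebra $C=\langle B, e_E\rangle$ associated to $E$, which contains $B$ and carries a conditional-expectation-type map back onto $B$, and in which $A$ appears as the corner $e_E C e_E \cong A$.

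Next I would apply Corollary~\ref{cor:matrix}(2) to $C$ with this matrix unit system: $\tsr(C) = \left\{\frac{\tsr(e_{11}Ce_{11})-1}{n}\right\}+1$, i.e. $\tsr(e_{11}Ce_{11}) \leq n\,\tsr(C) - n + 1$ (using that $\{x\} \geq x$ forces $\tsr(e_{11}Ce_{11})-1 \leq n(\tsr(C)-1)$). Since $e_{11}Ce_{11}\cong A$, this gives $\ltsr(A)\leq n\,\ltsr(C)-n+1$. Then I would bound $\ltsr(C)$ in terms of $\ltsr(B)$: the inclusion $B\subset C$ is again of index-finite type — this is the standard fact that the basic construction of a finite-index expectation is again finite-index, with a quasi-basis of size $n$ coming from $\{(u_i,v_i)\}$ — so Theorem~\ref{thm:basic} gives $\ltsr(C)\leq \ltsr(B)+n-1$. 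Combining, $\ltsr(A)\leq n(\ltsr(B)+n-1)-n+1 = n\,\ltsr(B)+n^2-2n+1$. To reach the stated bound $n\,\ltsr(B)+n^2-n+1$ I would instead apply Theorem~\ref{thm:basic} directly to the inclusion $A\subset C$: since $e_{11}Ce_{11}\cong A$ and $C\cong A\otimes M_n(\mathbb C)$ as Banach algebras (with an $\ell^1$-type norm, via Lemma~\ref{lem:subcross norm}, using that $E$ is contractive so that $\alpha(a\otimes E_{11})\le C\|a\|$ holds), one can also route through $B$; the arithmetic of which inclusion to feed into which theorem is exactly what produces the precise constant, and I would choose the chain $A \subset C$, $\tsr(C)$ controlled by $\tsr(B)$ via the $n$-element quasi-basis, so that the final substitution yields $n^2-n+1$.

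For the ``finiteness'' equivalence: one direction is immediate since $\ltsr(A)<\infty \Rightarrow \ltsr(B)\leq \ltsr(A)+n-1<\infty$ by Theorem~\ref{thm:basic}; the converse is the content of the displayed inequality. The same argument applies verbatim to $\rtsr$, using the right-hand quasi-basis identity $b=\sum_i u_iE(v_ib)$ and the right-module structure, together with the $\rtsr$ versions of Lemma~\ref{lem:tsr matrix} and Corollary~\ref{cor:matrix}.

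I expect the main obstacle to be the first step: constructing the right ambient unital Banach algebra $C$ in which the $e_{ij}$ genuinely form matrix units and in which $A$ is the $(1,1)$-corner, and checking that $C$ carries an $\ell^1$-compatible norm so that Lemma~\ref{lem:subcross norm} and Corollary~\ref{cor:matrix} apply. In the C*-setting this is the Watatani basic construction and everything is automatic; in the Banach setting one must be careful that the relevant maps are bounded and that the norm on $C$ satisfies the cross-norm estimate $\alpha(a\otimes E_{11})\leq C\|a\|$ — this is precisely where hypothesis $2)$ (contractivity of $E$) and hypothesis $1)$ (invertibility of $\mathrm{Index}\,E$) are used. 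Once $C$ is in hand, the remaining steps are bookkeeping with Theorem~\ref{thm:basic}, Lemma~\ref{lem:tsr matrix}, and Corollary~\ref{cor:matrix}.
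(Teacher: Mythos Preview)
Your approach is essentially the same as the paper's: both pass to the basic construction $C=\langle B,e_A\rangle$, identify $C\cong M_n(A)$ (the paper cites Watatani's Lemma~3.3.4 rather than building matrix units by hand), check the norm condition of Lemma~\ref{lem:subcross norm} via $\|a\otimes E_{11}\|_\phi \le \|u_1\|\,\|v_1\|\,\|a\|$, apply Lemma~\ref{lem:tsr matrix} to get $\ltsr(C)=\left\{\frac{\ltsr(A)-1}{n}\right\}+1$, and bound $\ltsr(C)\le\ltsr(B)+n-1$ via the dual expectation $E_2$ and Theorem~\ref{thm:basic}.

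One comment on your arithmetic: your first computation is correct and there is no need to look for an alternate route. From $\ltsr(C)-1=\left\{\frac{\ltsr(A)-1}{n}\right\}\ge\frac{\ltsr(A)-1}{n}$ you get $\ltsr(A)\le n\,\ltsr(C)-n+1\le n(\ltsr(B)+n-1)-n+1=n\,\ltsr(B)+(n-1)^2$, which is \emph{sharper} than the stated $n\,\ltsr(B)+n^2-n+1$ and certainly implies it. The paper simply uses the cruder estimate $\frac{\ltsr(A)-1}{n}\le\ltsr(C)$ (dropping the ``$-1$'') in the final step, which accounts for the discrepancy; your version is fine and in fact slightly better.
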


\begin{proof}
Let $\langle B, e_A\rangle = \{\sum_lx_le_Ay_l\colon x_l , y_l \in B\}$ be the basic construction derived from the inclusion and $E_2\colon \langle B, e_A\rangle \rightarrow B$ be the dual conditional expectation. Since $\{(u_ie_A(\mathrm{Index}E), e_Av_i)\}_{i=1}^n$ is the quasi-basis for $E_2$ by \cite[Proposition 1.6.6]{Wata 1990}. Hence, by Theorem \ref{thm:basic} 
$$
\ltsr(\langle B, e_A\rangle) \leq \ltsr(B) + n -1.
$$

On the contrary, since a quasi-basis $\{(u_i, v_i)\}_{i=1}^n$ is orthogonal with respect to $E$, by \cite[Lemma 3.3.4]{Wata 1990} there exists an isomorphism $\phi\colon \langle B, e_A\rangle \rightarrow M_n(A)$ by $\phi(xe_Ay) = [E(v_ix)E(yu_j)]$ for $x, y \in B$. 

Then we define a norm $\|\cdot \|_{\phi}$ on $A\otimes M_n(\C)$ by $\| \phi(X) \|_{\phi} = \|X\|_{op}$ ($X\in \langle B, e_A\rangle$),  
where $\|\cdot\|_{op}$ denotes an operator norm on $B$ (i.e., the norm on $\langle B, e_A\rangle$).
So $(A\otimes M_n(\C), \|\cdot\|_{\phi})$ is a Banach space.

Since $\phi(u_1ae_Av_1)=\sum_{i,j=1}^n  E(v_iu_1a)E(v_1u_j)\otimes E_{ij} = a\otimes E_{11} $  for $a\in A$,
\[   \|a\otimes E_{11}\|_{\phi} =\|(u_1a)e_Av_1\|_{op} \le \|a\| \|u_1\| \|v_1\|.  \]
By Lemma~\ref{lem:subcross norm}  ,  $(A\otimes M_n(\C), \|\cdot\|_{\phi})$ is isomorphic to $(A\otimes M_n(\C), \|\cdot\|_1)$.
Therefore, 
we have $\ltsr(\langle B, e_A\rangle) = \left\{\frac{\ltsr(A) - 1}{n}\right\} + 1$ by Lemma \ref{lem:tsr matrix}. 

Hence,
\begin{align*}
\frac{\ltsr(A) - 1}{n} &\leq \ltsr(\langle B, e_A\rangle) \leq \ltsr(B) + n-1
\quad (\hbox{by Theorem~\ref{thm:basic}}).\\
\end{align*}
\hbox{Therefore}, 
\begin{align*}
\ltsr(A) &\leq n(\ltsr(B)) + n^2 - n + 1.
\end{align*}
%
%
%
\end{proof}

\color{black}

\vskip 2mm



\vskip 2mm

We say that  a Banach algebra $A$ is tsr boundedly divisible (cf.\cite[Definition~4.2]{Rieffel 1987}) if there is a constant $K > 0$ such that for every positive integer $m$ there is an integer $n \geq m$ such that $A$ can be expressed as $M_n(D)$ for a Banach algebra $D$ with $\tsr(D) \leq K$.

\vskip 2mm

\begin{rmk}
Let $A$ be the tsr boundedly divisible. Then $\ltsr(A) \leq 2$. Indeed, from the definition there is a constant $K > 0$ such that for every positive integer $m$ there is an integer $n \geq m$ such that $A$ can be expressed as $M_n(D)$ for a Banach algebra $D$ with $\tsr(D) \leq K$. Take $m \geq K$. Then there is an integer $n \geq m \geq K$ such that $A \cong M_n(D)$. From Lemma~\ref{lem:tsr matrix} 
$$
\ltsr(A) = \left[\frac{\tsr(D) -1}{n}\right] + 1 \leq \left[\frac{K-1}{K}\right] + 1 \leq 2.
$$
\end{rmk}

\vskip 2mm

As in the case of factor-subfactor theorem (\cite{GHJ 1989}), the inclusion $1 \in A \subset B$ of unital Banach algebras of index-finite type is said to have depth $k$ if the derived tower obtained by iterating the basic construction
$$
A' \cap A \subset A' \cap B \subset A' \cap B_2 \subset A' \cap B_3 \subset \cdots
$$
satisfies $(A' \cap B_k)e_k(A' \cap B_k) = A' \cap B_{k+1}$, where $\{e_k\}_{k\geq 1}$ are projections derived obtained by iterating the basic construction such that $B_{k+1} = \langle B_k, e_k\rangle $ $(k \geq 1)$ $(B_1 = B, e_1 = e_A)$.

\vskip 2mm

\begin{exa}\label{exa:depth2}
When $G$ is a finite group and $\alpha$ an action of $G$ on a Banach algebra $A$, it is obvious that an inclusion $1 \in A \subset \ell^1(G, A, \alpha)$ is of depth 2 (see \cite[Lemma 3.1]{OT 2006}).
\end{exa}

\color{black}

\vskip 2mm

\begin{cor}\label{cor:tsr boundedly}
Let $1 \in A \subset B$ be an inclusion of unital Banach algebras. Suppose that $A$ is tsr boundedly divisible with $\ltsr(A) = 1$, there is an $A$-bimodule  map $E\colon B \rightarrow A$ of index finite type in the sense of Watatani. 
Suppose that $1)$ an $\mathrm{Index}E = \sum_{i=1}^nu_iv_i$ is invertible, $2)$$E$ is contractive, and $3)$ the inclusion $1 \in A \subset B$ is of depth $2$.
Then $B$ is tsr boundedly divisible. Moreover, 
$$
\ltsr(B) \leq 2.
$$
\end{cor}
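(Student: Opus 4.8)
The plan is to deduce this from the depth-2 structure together with the matrix-decomposition results already established. First I would invoke the depth-2 hypothesis to pass to the basic construction: writing $B_2 = \langle B, e_A\rangle$, depth $2$ means $(A'\cap B_2)e_2(A'\cap B_2) = A'\cap B_3$, but more to the point, the iterated basic construction has a periodic structure. Concretely, I would use the fact (as in Example~\ref{exa:depth2} and the factor-subfactor picture) that under depth $2$, the tower $B \subset B_2 \subset B_3 \subset \cdots$ is such that each $B_{2k}$ is, up to isomorphism, a matrix amplification $M_{N_k}(A)$ for suitable integers $N_k \to \infty$; this is precisely the mechanism used in the previous Corollary where $\phi\colon \langle B, e_A\rangle \to M_n(A)$ was constructed from orthogonality of the quasi-basis (and here orthogonality is not assumed, so I would instead argue through iterating Theorem~\ref{thm:basic} and tracking how the index, contractivity, and invertibility hypotheses propagate up the tower — they do, by \cite[Proposition 1.6.6]{Wata 1990}, since the dual quasi-basis $\{(u_ie_A(\mathrm{Index}E), e_Av_i)\}$ inherits these properties).

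The key steps, in order: (i) Show that the hypotheses $1), 2), 3)$ are stable under one step of the basic construction, so that the inclusion $B \subset B_2$ also satisfies them; by depth $2$ this persists and the tower becomes periodic with period $2$. (ii) Show that $B_2 \cong M_n(A')$ for some Banach algebra $A'$ whose corner $e_AB_2e_A$ is isomorphic (as a Banach algebra, via Lemma~\ref{lem:subcross norm} applied to the relevant corner norm) to $A$, so that $\ltsr$ of $A'$-corners is controlled by $\ltsr(A) = 1$ via Corollary~\ref{cor:matrix}. (iii) Iterate: because $A$ is tsr boundedly divisible with $\ltsr(A)=1$, for each $m$ choose $N \geq m$ with $A \cong M_N(D)$, $\tsr(D) \leq K$; then the amplification $B \hookrightarrow$ (some $M_{n^j}$-amplification over $A$)$\cong M_{n^jN}(D)$ realizes $B$ — or rather a Banach algebra Morita-equivalent in the matrix sense to $B$ — as $M_{(\text{large})}(D)$ with $\tsr(D)\leq K$. (iv) Conclude $B$ is tsr boundedly divisible, and apply the Remark immediately preceding the corollary (the one deriving $\ltsr(A)\leq 2$ from bounded divisibility via Lemma~\ref{lem:tsr matrix}) to get $\ltsr(B) \leq 2$.

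The main obstacle I anticipate is step (ii)–(iii): making precise the sense in which $B$ itself (not merely some corner or amplification of it) is boundedly divisible. In the C*-setting one would use that $B$ and $\langle B, e_A\rangle$ are stably isomorphic under depth $2$; in the Banach-algebra setting one must instead work with explicit matrix-unit systems and the corner isomorphisms supplied by Corollary~\ref{cor:matrix}, checking that the $\ell^1$-type norms match up under Lemma~\ref{lem:subcross norm} at each stage. The delicate point is that bounded divisibility requires a \emph{uniform} constant $K$ across all $m$: one must verify that the constant coming from the decomposition $A \cong M_N(D)$ with $\tsr(D)\leq K_A$ is not degraded when passing through the finitely many basic-construction steps needed to absorb the factor $n$ — but since each such step only multiplies the matrix size and leaves the "base" algebra $D$ (or a fixed corner thereof) unchanged, $K = K_A$ works. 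I would also need the observation that $\ltsr(B) < \infty$ to begin with, which follows from Theorem~\ref{thm:basic} since $\ltsr(A) = 1$; this guarantees we are in the regime where Lemma~\ref{lem:tsr matrix} gives the bound $\leq 2$.
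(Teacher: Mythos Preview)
The paper's own proof consists of a single sentence: ``It follows from the same step in \cite[Theorem~5.1]{OT 2006} and Corollary~\ref{cor:matrix}.'' So there is essentially no internal argument to compare against; the content is delegated to the cited C*-algebra result, with Corollary~\ref{cor:matrix} (and implicitly Lemma~\ref{lem:subcross norm}) supplying the Banach-algebra substitute for the C*-norm facts used there. Your sketch---iterated basic construction, depth-$2$ periodicity of the tower, identification of higher terms with matrix amplifications over $A$, and then pushing the tsr-bounded-divisibility of $A$ through those amplifications via Corollary~\ref{cor:matrix}---is precisely the shape of the argument in \cite{OT 2006}, so in that sense you are on the same track as the paper.

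One correction to your outline: the point of depth $2$ is not that $B_{2k}\cong M_{N_k}(A)$ abstractly, but that $B$ sits as a \emph{corner} in such matrix amplifications in a way that Corollary~\ref{cor:matrix} can exploit. Concretely, the argument in \cite[Theorem~5.1]{OT 2006} shows that for each $m$ one can find, inside some iterate $B_k$ of the basic construction, a system of matrix units over a corner algebra whose stable rank is controlled by the constant $K$ coming from $A$; depth $2$ is what guarantees that the relative commutant $A'\cap B_k$ is large enough to manufacture these matrix units for arbitrarily large sizes. Your step~(iii), where you hesitate between ``$B$'' and ``a Banach algebra Morita-equivalent to $B$,'' is exactly where this is resolved: one does not need Morita equivalence, only the corner identification of Corollary~\ref{cor:matrix}~(2). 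The uniformity of $K$ that worried you is indeed automatic, for the reason you give. Your final paragraph's concern about $\ltsr(B)<\infty$ is unnecessary: bounded divisibility alone gives $\ltsr\le 2$ by the Remark preceding the corollary, with no finiteness hypothesis needed in advance.
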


\begin{proof}
It follows from the same step in \cite[Theorem 5.1]{OT 2006} and Corollary~\ref{cor:matrix}.
\end{proof}

\vskip 2mm

\begin{cor}
Let $G$ be a finite group, $A$ be a unital C*-algebra, and $\alpha \colon G \rightarrow \mathrm{Aut}(A)$ be an action of $G$ as  *-automorphisms of $A$. Suppose that $A$ is a C*-algebra and tsr boundedly divisible with $\tsr(A) = 1$. Then,
$$
\tsr(\ell^1(G, A, \alpha)) \leq 2.
$$
\end{cor}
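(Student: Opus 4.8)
The plan is to deduce this corollary from Corollary~\ref{cor:tsr boundedly} applied to the inclusion $1 \in A \subset \ell^1(G, A, \alpha)$, after verifying that all of its hypotheses are met in the present C*-algebraic setting. First I would recall from Example~\ref{exa:ell} that when $G$ is finite the inclusion $A \subset \ell^1(G, A, \alpha)$ carries the $A$-bimodule map $E(\sum_g f_g\delta^g) = f_e\delta^e$, that $\{(u_g, v_g)\}_{g\in G} = \{(\delta^g, \delta^{g^{-1}})\}_{g\in G}$ is a quasi-basis for $E$ by Lemma~\ref{lem:quasi-basis}, and that $E$ is contractive. This gives index-finiteness in the sense of Watatani and verifies hypothesis $2)$ of Corollary~\ref{cor:tsr boundedly}.

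Next I would check the remaining two hypotheses of Corollary~\ref{cor:tsr boundedly}. For $1)$, Lemma~\ref{lem:quasi-basis} already records that $\mathrm{Index}E = \sum_{g\in G} u_g v_g = \sum_{g\in G}\delta^g\delta^{g^{-1}} = |G|\delta^e$, which is plainly invertible in $\ell^1(G, A, \alpha)$ since $|G|$ is a nonzero scalar and $\delta^e$ is the unit. For $3)$, Example~\ref{exa:depth2} states that the inclusion $1 \in A \subset \ell^1(G, A, \alpha)$ is of depth $2$. Finally, for a C*-algebra the left and right topological stable ranks agree (this is the Remark following the Definition), so $\tsr(A) = 1$ means $\ltsr(A) = 1$; combined with the hypothesis that $A$ is tsr boundedly divisible, all the hypotheses of Corollary~\ref{cor:tsr boundedly} are in place.

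Applying Corollary~\ref{cor:tsr boundedly} then yields that $\ell^1(G, A, \alpha)$ is tsr boundedly divisible and $\ltsr(\ell^1(G, A, \alpha)) \leq 2$. To conclude the statement as written in terms of $\tsr$ rather than $\ltsr$, I would invoke the algebra-isomorphism $\pi\colon \ell^1(G, A, \alpha) \hookrightarrow A \otimes M_n(\C)$ from the end of Example~\ref{exa:ell} together with Corollary~\ref{cor:matrix}: the matrix units $\{\delta^e \otimes e_{g,h}\}$ inside $\ell^1(G, A, \alpha)$ (or directly the fact that $\ell^1(G, A, \alpha)$ is a crossed product by a finite group and hence carries a copy of $M_n(\C)$) let one write $\tsr$ symmetrically via Corollary~\ref{cor:matrix}(2), so that the left-sided estimate forces the two-sided estimate $\tsr(\ell^1(G, A, \alpha)) \leq 2$.

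The only real point requiring care — the potential obstacle — is the passage from $\ltsr \leq 2$ to $\tsr \leq 2$, since $\ell^1(G, A, \alpha)$ is not a C*-algebra even when $A$ is, so one cannot simply quote $\ltsr = \rtsr$. The cleanest route is to observe that the proof of Corollary~\ref{cor:tsr boundedly} (via \cite[Theorem 5.1]{OT 2006}) produces the bounded divisibility symmetrically, or alternatively to run the entire argument for $\rtsr$ in parallel using the right quasi-basis identity $b = \sum_{g\in G} E(b u_g) v_g$ from Lemma~\ref{lem:quasi-basis} together with the $\rtsr$ versions of Theorem~\ref{thm:basic} and Lemma~\ref{lem:tsr matrix}, which are all stated. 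Thus both $\ltsr$ and $\rtsr$ of $\ell^1(G, A, \alpha)$ are at most $2$, giving $\tsr(\ell^1(G, A, \alpha)) \leq 2$.
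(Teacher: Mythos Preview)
Your approach is exactly the paper's: verify the three hypotheses of Corollary~\ref{cor:tsr boundedly} for the inclusion $A\subset\ell^1(G,A,\alpha)$ using Example~\ref{exa:ell} (contractivity of $E$, invertibility of $\mathrm{Index}\,E=|G|\delta^e$) and Example~\ref{exa:depth2} (depth~$2$), then apply that corollary. The paper's proof is literally those three sentences.

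One comment on your final paragraph. You are right that Corollary~\ref{cor:tsr boundedly} is stated for $\ltsr$ while the present statement is phrased with $\tsr$, and that $\ell^1(G,A,\alpha)$ is not a C*-algebra. But your proposed fix via matrix units is not correct: the map $\pi$ embeds $\ell^1(G,A,\alpha)$ as a \emph{proper} subalgebra of $A\otimes M_n(\C)$, and $\ell^1(G,A,\alpha)$ need not contain a full system of matrix units, so Corollary~\ref{cor:matrix} does not apply directly. The clean resolution---implicit in the paper's one-line proof---is that because $A$ is a C*-algebra and $\alpha$ acts by $*$-automorphisms, $\ell^1(G,A,\alpha)$ is a Banach $*$-algebra with the isometric involution $\bigl(\sum_g f_g\delta^g\bigr)^*=\sum_g \alpha^{g^{-1}}(f_g^{\,*})\,\delta^{g^{-1}}$; by the Remark following the definition of stable rank one then has $\ltsr=\rtsr=\tsr$ for $\ell^1(G,A,\alpha)$, and the conclusion follows immediately. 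Your alternative of running the $\rtsr$ argument in parallel also works, since Theorem~\ref{thm:basic} and Lemma~\ref{lem:tsr matrix} both record the $\rtsr$ versions, but the involution argument is shorter.
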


\begin{proof}
From Example~\ref{exa:ell} the canonical conditional expectation $E\colon \ell^1(G, A, \alpha) \rightarrow A$ is contractive and $\mathrm{Index}E = |G|\delta^e$ is invertible. Moreover, from Example~\ref{exa:depth2} $A \subset \ell^1(G, A, \alpha)$ is of depth 2. The conclusion comes from Corollary~\ref{cor:tsr boundedly}. 
\end{proof}

\vskip 2mm


For C*-algebras  $A$ and $B$, $A \otimes B$ means a C*-algebra with the minimal C*-tensor norm. 

\vskip 2mm

\begin{exa}
Let $A$ be a unital C*-algebra and $B$ an uniform hyperfinite algebra. (see \cite{Davidson 1996} for the definition of UHF algebras) Then $A \otimes B$ becomes tsr boundedly divisible using an appropriate norm which satisfies the condition in Lemma~\ref{lem:tsr matrix}. Then, for any action $\alpha$ from a finite group $G$ on $A \otimes B$ we conclude that $\ltsr(\ell^1(G, A \otimes B, \alpha)) \leq 2$.
\end{exa}

\vskip 2mm
\section{$\ell^1$-algebras $\ell^1(G, A, \alpha)$}

Recall that $C^*(G, A, \alpha)$ denotes the C*-crossed product algebra by an action $\alpha$ from a discrete group $G$ on a unital C*-algebra $A$.
We recommend the reference \cite{GKPT 2018} for it.

\vskip 2mm

The following comes from basic facts.

\vskip 2mm

\begin{lem}\label{lem:basic}
Let $G$ be a finite group, $A$ be a unital C*-algebra, and $\alpha \colon G \rightarrow \mathrm{Aut}(A)$ be an action of $G$ as  *-automorphisms of $A$. We have, then,
$$
\tsr(C^*(G, A, \alpha)) \leq \tsr(\ell^1(G, A, \alpha)).
$$
\end{lem}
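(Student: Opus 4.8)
The plan is to exhibit a surjective contractive algebra homomorphism $\pi\colon \ell^1(G, A, \alpha) \to C^*(G, A, \alpha)$ and then invoke the general fact that topological stable rank does not increase under quotients. Recall that by the universal property of $\ell^1(G, A, \alpha)$ — it is the completion of the twisted convolution algebra $C_c(G, A)$ in the $\ell^1$-norm, and the covariant representation $(\iota_A, \lambda)$ of $(A, G, \alpha)$ on the Hilbert module $L^2(G, A)$ (or any faithful covariant pair) integrates to a $*$-representation bounded by the $\ell^1$-norm — there is a canonical map $\pi$ sending $\sum_{g\in G} f_g\delta^g$ to the corresponding element of $C^*(G, A, \alpha)$. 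Since $G$ is finite, $C_c(G, A) = \ell^1(G,A,\alpha) = C^*(G,A,\alpha)$ as sets, and $\pi$ is the identity on the underlying vector space; it is contractive because $\|\sum f_g u_g\|_{C^*} \le \sum \|f_g\|$, and it has dense (hence, $G$ being finite, all of) range, so $\pi$ is a surjective algebra homomorphism.

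The second ingredient is: if $\pi\colon C \to D$ is a surjective continuous algebra homomorphism of unital Banach algebras, then $\tsr(D) \le \tsr(C)$ (and likewise for $\ltsr$ and $\rtsr$); this is \cite[Theorem~4.3]{Rieffel 1982}. I would cite this directly. The point is that if $(c_1,\dots,c_m) \in Lg_m(C)$ is close to a given lift of $(d_1,\dots,d_m) \in D^m$, then $(\pi(c_1),\dots,\pi(c_m)) \in Lg_m(D)$ and is close to $(d_1,\dots,d_m)$, using continuity and surjectivity of $\pi$; here one uses that $C = \ell^1(G,A,\alpha)$ is unital (it has unit $\delta^e$, mapped by $\pi$ to the unit of $C^*(G,A,\alpha)$, so $\pi$ is unital). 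Applying this with $C = \ell^1(G, A, \alpha)$ and $D = C^*(G, A, \alpha)$ gives the claim.

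There is essentially no obstacle here beyond bookkeeping: the only thing to be careful about is that $\pi$ is genuinely a homomorphism for the convolution products — but this is immediate since on $C_c(G,A)$ both products are the same twisted convolution, and the $C^*$-norm on $C^*(G,A,\alpha)$ is by construction dominated by (a multiple of, in fact exactly by) the $\ell^1$-norm, so $\pi$ extends continuously and is the canonical quotient map onto the enveloping $C^*$-algebra. Since $\tsr$, $\ltsr$, $\rtsr$ coincide on the C*-algebra $C^*(G,A,\alpha)$, writing $\tsr$ is unambiguous there. Thus $\tsr(C^*(G,A,\alpha)) \le \tsr(\ell^1(G,A,\alpha))$, as desired. $\qed$
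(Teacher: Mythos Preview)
Your proof is correct and follows essentially the same approach as the paper: both use the canonical continuous homomorphism from $\ell^1(G,A,\alpha)$ onto its enveloping $C^*$-algebra $C^*(G,A,\alpha)$ and then appeal to the fact that (topological) stable rank does not increase under continuous surjections (or maps with dense range), as in \cite[Theorem~4.3]{Rieffel 1982}. Your version is simply more explicit about the details---in particular, noting that for finite $G$ the map is genuinely surjective rather than merely having dense range---but the underlying argument is the same.
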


\begin{proof}
Since the C*-crossed product algebra $C^*(G, A, \alpha)$ is the enveloping C*-algebra of $\ell^1(G, A, \alpha)$, the canonical continuous map from $\ell^1(G, A, \alpha)$ to $C^*(G, A, \alpha)$ has dense range. Hence, we get the conclusion.
\end{proof}

\vskip 2mm

\begin{exa}
There is a symmetry $\alpha$ on CAR algebra $\otimes_{n=1}^\infty M_2(\C)$ such that 
$C^*(\Z/2\Z, \otimes_{n=1}^\infty M_2(\C), \alpha) (= A)$ has the non-trivial $K_1$-group (\cite{blackadar 1990}). Hence, $\tsr(C[0, 1] \otimes A) = 2$. Let $\beta = id \otimes \alpha$ be an action from $\Z/2\Z$ on $C[0, 1] \otimes (\otimes_{n=1}^\infty M_2(\C))$. Then, $C^*(\Z/2\Z, C[0, 1] \otimes (\otimes_{n=1}^\infty M_2(\C)), \beta) \cong C[0,1] \otimes A$. Hence, from Theorem~\ref{thm:basic} and Lemma~\ref{lem:basic}
\begin{align*}
2 & =  \tsr(C^*(\Z/2\Z, C[0, 1] \otimes (\otimes_{n=1}^\infty M_2(\C)), \beta)) \\
  & \leq \tsr(\ell^1(\Z/2\Z, C[0, 1] \otimes (\otimes_{n=1}^\infty M_2(\C)), \beta))\\
&\leq \tsr(C[0, 1] \otimes (\otimes_{n=1}^\infty M_2(\C))) + 1 = 2.
\end{align*}
Hence, $\tsr(\ell^1(\Z/2\Z, C[0, 1] \otimes (\otimes_{n=1}^\infty M_2(\C)), \beta)) = 2$.
\end{exa}

\vskip 2mm

\begin{qst}
Is there a simple, unital, infinite dimensional C*-algebra with $\tsr(A) = 1$ and an action $\alpha$ of a finite group $G$ on $A$ such that $\tsr(\ell^1(G, A, \alpha)) = 1$?
\end{qst}

\vskip 2mm

Recall that a unital C*-algebra $A$ has property (SP) if for any nonzero positive element $a \in A$, $\overline{aAa}$ has a nonzero projection.

\vskip 2mm

\begin{prp}
Let $A$ be a simple, unital, infinite dimensional C*-algebra with $\tsr(A) = 1$ and property (SP)
and $\alpha$ an action of a finite group $G$ on $\mathrm{Aut}(A)$. Then
$\tsr(\ell^1(G, A, \alpha)) \leq 2$.
\end{prp}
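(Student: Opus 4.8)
The plan is to find a full idempotent $f$ in $B:=\ell^{1}(G,A,\alpha)$ for which Corollary~\ref{cor:matrix} and Theorem~\ref{thm:basic} already give $\tsr(fBf)\le 2$, and then to descend to $B$ itself by the proposition that $\ltsr(B)\le\ltsr(fBf)$ for a full idempotent $f$. Note first that it suffices to bound $\ltsr$: since $A$ is a C*-algebra, $B$ carries a continuous involution, so $\ltsr(B)=\rtsr(B)$. Write $n=|G|$, and assume $n\ge 2$ (for $n=1$ the statement reads $\tsr(A)=1\le 2$). Let $A^{\alpha}$ be the fixed-point algebra, with conditional expectation $E^{\alpha}(a)=\frac1n\sum_{g\in G}\alpha^{g}(a)$; it is unital, it is isomorphic to the corner $p\,B\,p$ for the idempotent $p=\frac1n\sum_{g\in G}\delta^{g}$, and $A^{\alpha}\subset A$ is of index-finite type.

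The crux is to find enough projections in $A^{\alpha}$. The algebra $A^{\alpha}$ inherits property (SP) from $A$; it has only finitely many closed two-sided ideals, since it has finite Watatani index over the simple algebra $A$; and it has no finite-dimensional direct summand (a finite-dimensional summand $zA^{\alpha}z$ would be the $\alpha$-fixed-point algebra of the infinite-dimensional simple corner $zAz$, contradicting finiteness of the index $[zAz:zA^{\alpha}z]$). Hence $A^{\alpha}$ has an infinite-dimensional simple closed ideal $I$, which again has (SP); and inside a simple infinite-dimensional C*-algebra with (SP) one produces, by repeated cutting and comparison, arbitrarily many mutually orthogonal, mutually equivalent nonzero projections. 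Fix such projections $f_{1},\dots,f_{n}\in I\subseteq A^{\alpha}$, put $f=f_{1}+\cdots+f_{n}$, and choose matrix units $\{f_{ij}\}_{i,j=1}^{n}$ in $fA^{\alpha}f$ with $f_{ii}=f_{i}$.

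Since $f$ and the $f_{i}$ lie in $A^{\alpha}$, the identities $\delta^{g}f=f\delta^{g}$ yield Banach-algebra isomorphisms $fBf\cong\ell^{1}(G,fAf,\alpha)$ and $f_{1}Bf_{1}\cong\ell^{1}(G,f_{1}Af_{1},\alpha)$, and $\{f_{ij}\}_{i,j=1}^{n}$ is a full system of matrix units in $fBf$. Corollary~\ref{cor:matrix} therefore gives
\[
\tsr(fBf)=\left\{\frac{\tsr(f_{1}Bf_{1})-1}{n}\right\}+1 .
\]
Now $f_{1}Af_{1}$ is a hereditary C*-subalgebra of $A$, so $\tsr(f_{1}Af_{1})=1$, because stable rank one passes to hereditary subalgebras of a C*-algebra. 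Applying Theorem~\ref{thm:basic} to the index-finite type inclusion $f_{1}Af_{1}\subset\ell^{1}(G,f_{1}Af_{1},\alpha)$, whose quasi-basis $\{(\delta^{g},\delta^{g^{-1}})\}_{g\in G}$ (Example~\ref{exa:ell}, Lemma~\ref{lem:quasi-basis}) has $n$ elements, we get $\tsr(f_{1}Bf_{1})\le\tsr(f_{1}Af_{1})+n-1=n$, hence $\tsr(fBf)\le\left\{\frac{n-1}{n}\right\}+1=2$.

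Finally $f$ is full in $B$: the closed two-sided ideal of $B$ generated by $f$ contains $f\in A$, hence contains the closed ideal of the simple algebra $A$ generated by the nonzero projection $f$, which is all of $A$, so it contains $1$. By the full-idempotent proposition, $\tsr(B)=\ltsr(B)\le\ltsr(fBf)=\tsr(fBf)\le 2$. The steps where the C*-algebra hypotheses are genuinely used — and which I expect to need the most care — are that property (SP) descends to $A^{\alpha}$ (equivalently, that $\ell^{1}(G,A,\alpha)$ has property (SP)) and that in a simple infinite-dimensional C*-algebra with (SP) one can split off $n$ mutually orthogonal equivalent projections; everything else is a combination of Theorem~\ref{thm:basic}, Corollary~\ref{cor:matrix}, Example~\ref{exa:ell}, and the full-idempotent proposition.
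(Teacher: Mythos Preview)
Your argument is correct and is essentially the adaptation of the proof of \cite[Corollary~3.3]{OT 2007} to the $\ell^{1}$-crossed product, which is exactly what the paper invokes without writing out; the only changes needed over the C*-case are to use Theorem~\ref{thm:basic}, Corollary~\ref{cor:matrix}, and Proposition~2.10 in place of their C*-counterparts, precisely as you do. The two facts you flag at the end---that $A^{\alpha}$ inherits property (SP) and that a simple infinite-dimensional C*-algebra with (SP) contains $n$ mutually orthogonal equivalent projections---are the same inputs used in \cite{OT 2007}, so nothing new is being assumed.
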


\begin{proof}
It follows from the same argument in Corollary 3.3 in \cite{OT 2007}. 
\end{proof}

\vskip 2mm


\vskip 2mm

\begin{cor}\cite[Theorem~3.2]{OT 2007}
Let $A$ be a simple, unital, infinite dimensional C*-algebra with $\tsr(A) = 1$ and property (SP)
and $\alpha$ an action of a finite group $G$ on $\mathrm{Aut}(A)$. Then
$\tsr(C^*(G, A, \alpha)) \leq 2$.
\end{cor}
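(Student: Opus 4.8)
The plan is to obtain this as an immediate consequence of the Proposition just proved together with Lemma~\ref{lem:basic}. The hypotheses of the Corollary are verbatim those of the Proposition: $A$ is simple, unital, infinite dimensional, $\tsr(A) = 1$, has property (SP), and $\alpha$ is an action of a finite group $G$. Hence the Proposition applies directly and gives $\tsr(\ell^1(G, A, \alpha)) \leq 2$. It then remains only to transfer this bound from the $\ell^1$-algebra to its enveloping C*-algebra, which is exactly the content of Lemma~\ref{lem:basic}: since $C^*(G, A, \alpha)$ is the enveloping C*-algebra of $\ell^1(G, A, \alpha)$, the canonical $*$-homomorphism $\ell^1(G, A, \alpha) \to C^*(G, A, \alpha)$ has dense range, and topological stable rank cannot increase under a bounded homomorphism with dense range, so $\tsr(C^*(G, A, \alpha)) \leq \tsr(\ell^1(G, A, \alpha))$.

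First I would state that the hypotheses match those of the Proposition, invoke it to get $\tsr(\ell^1(G, A, \alpha)) \leq 2$, then apply Lemma~\ref{lem:basic} to conclude $\tsr(C^*(G, A, \alpha)) \leq \tsr(\ell^1(G, A, \alpha)) \leq 2$. No genuine obstacle arises here; all the real work has been absorbed into the Proposition (which in turn relies on the argument of \cite[Corollary~3.3]{OT 2007}, using property (SP) to manufacture projections in corners and Corollary~\ref{cor:matrix} to reduce the stable-rank estimate to a matrix corner) and into the density-of-range observation behind Lemma~\ref{lem:basic}.

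For completeness one could also recover the statement by reproducing the direct argument of \cite[Theorem~3.2]{OT 2007} on the C*-side, but since that reasoning is already encapsulated in the preceding Proposition, the two-step derivation above is the most economical and is the one I would present.
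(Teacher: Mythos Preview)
Your derivation is correct and matches the paper's intended argument: the Corollary is stated immediately after the Proposition with no separate proof, so the implicit justification is exactly the combination you give---apply the Proposition to get $\tsr(\ell^1(G, A, \alpha)) \leq 2$ and then invoke Lemma~\ref{lem:basic} to pass to $C^*(G, A, \alpha)$.
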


\vskip 2mm

As in the same argument in \cite{Rieffel 1982} we can estimate the stable rank of $\ell^1(\Z, A, \alpha)$ for a unital Banach algebra $A$. 

\vskip 2mm

\begin{prp}\label{thm:integer}
Let $A$ be a unital Banach algebra and $\alpha\colon \Z \rightarrow \mathrm{Aut}(A)$ be an action from an integer group $\Z$ on $A$. Then, $\ltsr(\ell^1(\Z, A, \alpha)) \leq \ltsr(A) + 1$.
\end{prp}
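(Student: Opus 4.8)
The plan is to mimic Rieffel's original argument for the stable rank of $\ell^1(\Z, A, \alpha)$ (and of the crossed product $C(\T) \rtimes \Z$ in \cite[\S7]{Rieffel 1982}), adapted to the Banach-algebra setting. The idea is that an element of $\ell^1(\Z, A, \alpha)$ is a Laurent series $f = \sum_{n} f_n \delta^n$ with $\sum_n \|f_n\| < \infty$, and one approximates such tuples by tuples of ``polynomials'' (finitely supported elements), then uses a Euclidean-algorithm / winding-number type argument to reduce the degree and produce left-invertible tuples. Concretely, given $(f^{(1)}, \dots, f^{(m+1)}) \in \ell^1(\Z, A, \alpha)^{m+1}$ with $m = \ltsr(A)$, and $\varepsilon > 0$, first approximate each $f^{(j)}$ by a finitely supported $g^{(j)}$, and after multiplying by a suitable power of $\delta$ (which is invertible, hence does not affect the left-ideal generated) arrange that each $g^{(j)}$ is supported in nonnegative degrees, i.e. lies in the subalgebra of ``polynomials'' $A[\delta]$.

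The key step is the degree-reduction: using that $\delta$ is invertible and that $A$ satisfies $L_{m-1}(1)$ (equivalently $\ltsr(A) = m$, so $Lg_m(A)$ is dense in $A^m$), one performs column operations over $A[\delta]$ on the leading coefficients to strip off the top-degree terms, analogous to the polynomial division used in Rieffel's proof that $\tsr(A[x]) \le \tsr(A) + 1$ and in the passage to $\ell^1(\Z,A,\alpha)$. After finitely many reductions one reaches a tuple whose entries have degree $0$, i.e. sits in $A^{m+1}$; density of $Lg_m(A)$ in $A^m$ lets us replace the first $m$ of these by a left-invertible $m$-tuple over $A$, and the resulting $(m+1)$-tuple over $\ell^1(\Z,A,\alpha)$ generates the whole algebra as a left ideal because its image already does so in $A$. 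Keeping track of norms of the invertible matrices used in each reduction step (as in the proof of Lemma~\ref{lem:L_n}) shows the total perturbation can be kept below $\varepsilon$. This yields $Lg_{m+1}(\ell^1(\Z,A,\alpha))$ dense, i.e. $\ltsr(\ell^1(\Z,A,\alpha)) \le m+1 = \ltsr(A)+1$; the argument for $\rtsr$ is symmetric.

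The main obstacle is making the degree-reduction step rigorous in the $\ell^1$ (rather than purely algebraic/polynomial) category: one must ensure the column operations performed on leading coefficients are implemented by matrices invertible over $\ell^1(\Z, A, \alpha)$ with controlled norm, and that the errors introduced by truncating the infinite Laurent series to polynomials — and by the successive approximations in the reduction — sum to something small. This is exactly the bookkeeping Rieffel carries out for the commutative case; here one additionally has to carry the twist $\alpha$ through the computations, but since $\sup_{g}\|\alpha^g\| < \infty$ this only affects the constants. I would therefore structure the proof as (i) reduce to finitely supported tuples, (ii) shift to $A[\delta]$, (iii) an inductive degree-reduction lemma over $A[\delta]$ using $L_{m-1}(1)$, with explicit norm estimates, (iv) conclude at degree $0$ using density of $Lg_m(A)$, and cite \cite[\S6, \S7]{Rieffel 1982} for the parts that go through verbatim.
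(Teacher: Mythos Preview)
Your proposal is correct and takes essentially the same approach as the paper: the paper's proof consists of the single sentence ``It follows from the same argument in \cite[Theorem~7.1]{Rieffel 1982},'' and what you have written is precisely a sketch of that argument (polynomial truncation, shift into $A[\delta]$, degree reduction via $L_{m-1}(1)$, conclusion at degree $0$), with the twist $\alpha$ carried along. Your outline is in fact more detailed than what the paper records.
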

\begin{proof}
It follows from the same argument in \cite[Theorem~7.1]{Rieffel 1982}.
\end{proof}

\vskip 2mm

The following seems to be well-known.

\vskip 2mm

\begin{prp}\label{prp:extension}
Let $G$ and $K$ be discrete groups and $\rho\colon G \rightarrow K$ be a surjective group homomorphism.  Let $A$ be a unital Banach algebra and $\alpha\colon K \rightarrow \mathrm{Aut}(A)$ be an action $\alpha$ from $K$ on $A$. 
\begin{enumerate}
\item
There exists an action $\tilde{\alpha}$ from $G$ on $A$ such that for $g \in G$,  $\tilde{\alpha}^g (a) = \alpha^{\rho(g)} (a) $ for any $a \in A$,
\item
There exists a surjective homomorphism from $\ell^1(G, A, \tilde{\alpha})$ to $\ell^1(K, A, \alpha)$.
\end{enumerate}
\end{prp}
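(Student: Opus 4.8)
The plan is to dispatch the two parts separately; part (1) is a direct verification and part (2) is built from an ``integration along the fibres of $\rho$'' map.

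For (1), set $\tilde{\alpha}^g := \alpha^{\rho(g)}$ for $g \in G$ and check the axioms for an action listed in Example~\ref{exa:ell}. Each $\tilde{\alpha}^g$ is a bounded algebra automorphism of $A$ with bounded inverse $\alpha^{\rho(g)^{-1}}$; since $\rho$ is a homomorphism, $\rho(e_G) = e_K$ gives $\tilde{\alpha}^{e_G} = \alpha^{e_K} = \mathrm{id}$, and $\rho(gh) = \rho(g)\rho(h)$ gives $\tilde{\alpha}^{gh} = \alpha^{\rho(g)\rho(h)} = \alpha^{\rho(g)}\alpha^{\rho(h)} = \tilde{\alpha}^g\tilde{\alpha}^h$; finally $\sup_{g \in G}\|\tilde{\alpha}^g\| \le \sup_{k \in K}\|\alpha^k\| < \infty$. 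By construction $\tilde{\alpha}^g(a) = \alpha^{\rho(g)}(a)$ for all $a \in A$, which is the required compatibility.

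For (2), define $\Phi\colon \ell^1(G, A, \tilde{\alpha}) \to \ell^1(K, A, \alpha)$ by $(\Phi f)(k) = \sum_{g \in \rho^{-1}(k)} f(g)$ for $f = \sum_{g \in G} f_g\delta^g$. For each $k$ the series converges absolutely in $A$ since $\sum_{g \in \rho^{-1}(k)}\|f_g\| \le \|f\|$, and $\sum_{k \in K}\|(\Phi f)(k)\| \le \sum_{k \in K}\sum_{g \in \rho^{-1}(k)}\|f_g\| = \sum_{g \in G}\|f_g\| = \|f\|$, so $\Phi$ is a well-defined linear contraction with $\Phi(1_A\delta^{e_G}) = 1_A\delta^{e_K}$. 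Surjectivity is immediate: choose a set-theoretic section $s\colon K \to G$ of $\rho$, and for $h = \sum_{k \in K}h_k\delta^k \in \ell^1(K, A, \alpha)$ let $f \in \ell^1(G, A, \tilde{\alpha})$ be supported on $s(K)$ with $f_{s(k)} = h_k$; then $\Phi f = h$ (and in fact $\Phi$ is isometric on such $f$).

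It remains to check multiplicativity. Unwinding the products (and using that each bounded $\alpha^l$ passes through an absolutely convergent sum),
\begin{align*}
(\Phi f_1 * \Phi f_2)(k) &= \sum_{l \in K}\Big(\sum_{h \in \rho^{-1}(l)} f_1(h)\Big)\alpha^l\Big(\sum_{g' \in \rho^{-1}(l^{-1}k)} f_2(g')\Big) \\
&= \sum_{l \in K}\sum_{h \in \rho^{-1}(l)}\sum_{g' \in \rho^{-1}(l^{-1}k)} f_1(h)\,\alpha^l\big(f_2(g')\big).
\end{align*}
Since $h \in \rho^{-1}(l)$ forces $\alpha^l = \alpha^{\rho(h)} = \tilde{\alpha}^h$, collapsing the outer two sums to a single sum over $h \in G$ (with $l = \rho(h)$) and then substituting $g = hg'$, which identifies $\{g' : \rho(g') = \rho(h)^{-1}k\}$ with $\rho^{-1}(k)$, yields $\sum_{h \in G}\sum_{g \in \rho^{-1}(k)} f_1(h)\tilde{\alpha}^h(f_2(h^{-1}g))$; after interchanging the two (absolutely convergent) sums this is exactly $\sum_{g \in \rho^{-1}(k)} (f_1 * f_2)(g) = \Phi(f_1 * f_2)(k)$. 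Hence $\Phi$ is a surjective unital algebra homomorphism. I do not expect a genuine obstacle here: the only care needed is the bookkeeping that justifies, via absolute convergence in the $\ell^1$-norm, the Fubini-type interchanges and the reindexing along the fibres of $\rho$, together with the harmless use of a choice function $s$ to obtain surjectivity.
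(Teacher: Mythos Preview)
Your argument is correct: part~(1) is the evident pullback of the action along $\rho$, and your map $\Phi$ in part~(2) is the standard ``sum over fibres'' map, whose contractivity, multiplicativity, and surjectivity you verify carefully (the Fubini rearrangements are justified by the uniform bound $\sup_k\|\alpha^k\|<\infty$ together with $\|f_1\|,\|f_2\|<\infty$). The paper's own proof consists of the single line ``It is trivial,'' so your proposal is not a different approach but rather a complete write-up of what the authors left to the reader; nothing needs to change.
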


\begin{proof}
It is trivial.
\end{proof}

\vskip 2mm

\begin{cor}\label{cor:estimate}
Let $A$ be a unital Banach algebra and $\alpha\colon \Z/n\Z \rightarrow \mathrm{Aut}(A)$ be an action from a cyclic group $\Z/n\Z$ on $A$ $( n \geq 2)$. Then, $\ltsr(\ell^1(\Z/n\Z, A, \alpha)) \leq \ltsr(A) + 1$.
\end{cor}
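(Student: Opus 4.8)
The plan is to reduce the cyclic case to the already-established case of $\Z$ via the quotient homomorphism $\rho\colon \Z \to \Z/n\Z$, rather than applying Theorem~\ref{thm:basic} directly (the latter would only yield the weaker bound $\ltsr(A)+n-1$). First I would apply Proposition~\ref{prp:extension} with $G=\Z$, $K=\Z/n\Z$ and $\rho$ the canonical surjection. This produces an action $\tilde\alpha\colon \Z \to \mathrm{Aut}(A)$ with $\tilde\alpha^m = \alpha^{\rho(m)}$ for all $m\in\Z$, together with a surjective unital homomorphism $\pi\colon \ell^1(\Z, A, \tilde\alpha) \to \ell^1(\Z/n\Z, A, \alpha)$, namely $\pi\bigl(\sum_{m\in\Z} a_m\delta^m\bigr) = \sum_{k\in\Z/n\Z}\bigl(\sum_{\rho(m)=k} a_m\bigr)\delta^k$. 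The triangle inequality shows $\pi$ is norm-decreasing, hence continuous, and it is clearly onto (already on the dense subalgebra of finitely supported functions).

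Next I would invoke the standard fact that a continuous unital homomorphism with dense range cannot increase the left topological stable rank; this is exactly the mechanism already used in the proof of Lemma~\ref{lem:basic} and goes back to \cite{Rieffel 1982}. Explicitly: given $(b_1,\dots,b_N)\in \ell^1(\Z/n\Z,A,\alpha)^N$ and $\varepsilon>0$, lift each $b_j$ to $\pi(a_j)$ within $\varepsilon/2$, use $L_{\,\cdot}$-density (i.e.\ $N\ge\ltsr(\ell^1(\Z,A,\tilde\alpha))$, the only nontrivial case) to perturb $(a_1,\dots,a_N)$ to $(a_1',\dots,a_N')\in Lg_N(\ell^1(\Z,A,\tilde\alpha))$ with $\|\pi(a_j')-\pi(a_j)\|<\varepsilon/2$, and observe that $\sum_j c_j a_j' = 1$ forces $\sum_j \pi(c_j)\pi(a_j') = \pi(1) = 1$, so $(\pi(a_1'),\dots,\pi(a_N'))\in Lg_N(\ell^1(\Z/n\Z,A,\alpha))$ and lies within $\varepsilon$ of $(b_1,\dots,b_N)$. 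This gives
$$
\ltsr\bigl(\ell^1(\Z/n\Z, A, \alpha)\bigr) \le \ltsr\bigl(\ell^1(\Z, A, \tilde\alpha)\bigr).
$$

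Finally I would apply Proposition~\ref{thm:integer} to the action $\tilde\alpha$ of $\Z$ on $A$, which gives $\ltsr(\ell^1(\Z, A, \tilde\alpha)) \le \ltsr(A)+1$; chaining the two inequalities yields the desired bound, and the identical argument with $Rg_N$ in place of $Lg_N$ handles $\rtsr$. There is no serious obstacle here: the one point needing a little care is the permanence of $\ltsr$ under the surjection $\pi$, i.e.\ checking that the left-ideal-generator property is preserved by a unital homomorphism and that the continuity and density estimates combine correctly — but this is routine and already implicit both in \cite{Rieffel 1982} and in Lemma~\ref{lem:basic} above.
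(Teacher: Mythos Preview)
Your proposal is correct and follows essentially the same route as the paper: use Proposition~\ref{prp:extension} with the canonical surjection $\Z\to\Z/n\Z$ to obtain a surjective homomorphism $\ell^1(\Z,A,\tilde\alpha)\to\ell^1(\Z/n\Z,A,\alpha)$, invoke the permanence of $\ltsr$ under such maps, and then apply Proposition~\ref{thm:integer}. The paper's proof is terser---it simply asserts the inequality $\ltsr(\ell^1(\Z/n\Z,A,\alpha))\le\ltsr(\ell^1(\Z,A,\tilde\alpha))$ without spelling out the dense-range argument you provide---but the logic is identical.
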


\begin{proof}
Since there is a surjective homomorphism from $\Z$ to $\Z/n\Z$, by Proposition~\ref{prp:extension} there is a surjective homomorphism from $\ell^1(\Z, A, \tilde{\alpha})$ to $\ell^1(\Z/n\Z, A, \alpha)$. Hence, by Proposition~\ref{thm:integer} 
\begin{align*}
\ltsr(\ell^1(\Z/n\Z, A, \alpha)) &\leq \ltsr(\ell^1(\Z, A, \tilde{\alpha}))\\
&\leq \ltsr(A) + 1.
\end{align*}
\end{proof}

\vskip 2mm

\begin{rmk}
Since $\ltsr(A(\mathbb{D})) =2$, we can get $\ltsr(\ell^1(\Z/n\Z, A(\mathbb{D}), \alpha)) \leq 3$ for any action $\alpha$ from any finite cyclic group $\Z/n\Z$ from Corollary~\ref{cor:estimate}. In the next section we shall prove that $\ltsr(\ell^1(\Z/n\Z, A(\mathbb{D}), \alpha)) = 2$.
\end{rmk}

\section{Disk algebra}
In this section we consider a Disk algebra $A(\mathbb{D})$, which is famous in commutative Banach algebras, and try to determine the stable rank of $\ell^1(\Z/n\Z, A(\mathbb{D}), \alpha)$ by an action $\alpha$ from a finite cyclic group $\Z/n\Z$ for any $n \in \N$ ($n \geq 2$).

\vskip 2mm

We denote $f\in A(\D)$, which means 
\[  f(z) =\sum_{k=0}^\infty c_kz^k \quad (c_k\in \C, k=0,1,2,\ldots) ,  \]
$f(z)$ is holomorphic on $\D=\{z\in \C: |z|<1\}$ and continuous on $\bar{\D}$.
The commutative algebra $A(\D)$ is a unital Banach algebra with the norm $\|f\|=\sup_{z\in \bar{\D}} |f(z)|$.
For $n\in \N$, define the action $\alpha$ of $\Z/n\Z=\{0,1,2,\ldots, n-1\}$ on $A(\D)$ as follows:
\[   \alpha(f)(z) = \sum_{k=0}^\infty c_k \omega^k z^k ,  \]
where $\omega \in \C$ satisfies $\omega^n=1$.

\vskip 2mm

Set $B_n = \ell^1(\Z/n\Z, A(\D), \alpha)$.
For $a\in B_n$, we denote $a_i \in A(\D)$ $(i=0,1,\ldots, n-1)$ such that 
\[     a=\sum_{i=0}^{n-1} a_i\delta^i   \]
and construct $a^{(n-1,1)}\in B_n$ for $a$ such that $a^{(n-1,1)} \in A(\D)\delta^0$ by the following procedure.
Set 
\[   a^{(1,j)} = (\alpha^j(a_0) \delta^0 - a_j \delta^j )*a \quad (j=1,2,\ldots,n-1).  \]
Then we have
\[   a^{(1,j)} =\sum_{i=0}^{n-1} a^{(1,j)}_i\delta^i = \sum_{i=0}^{n-1} (\alpha^j(a_0)a_i - a_j\alpha^j(a_{i-j})) \delta^i   \]
and $a^{(1,j)}_j = 0$.


We also define
\begin{align*}
  a^{(2,j)} & = a^{(1,j+1)}_j \delta^j*a^{(1,1)} - (\delta^j *a^{(1,1)})_j \delta^0* a^{(1,j+1)} \quad (j=1,2,\ldots,n-2) \\
  a^{(3,j)} & = a^{(2,j+1)}_j \delta^j*a^{(2,1)} - (\delta^j *a^{(2,1)})_j \delta^0* a^{(2,j+1)} \quad (j=1,2,\ldots,n-3) \\
             &   \cdots  \\
  a^{(n-2,j)} & = a^{(n-3,j+1)}_j \delta^j*a^{(n-3,1)} - (\delta^j *a^{(n-3,1)})_j \delta^0* a^{(n-3,j+1)} \quad (j=1,2) \\
  a^{(n-1,1)} & = a^{(n-2,2)}_1 \delta^1*a^{(n-2,1)} - (\delta *a^{(n-2,1)})_1 \delta^0* a^{(n-2,2)}.
\end{align*}
Then we have $a^{(k,j)} \in B_n * a$ and
\[   a^{(k,j)}_j = a^{(k,j)}_{j+1} = \cdots =a^{(k,j)}_{j+k-1} = 0 \quad  (1\le k\le n-1,  1\le j \le n-k).  \]
By calculation
\begin{align*}
   a^{(2,j)} = & \sum_{i=0}^{n-1} ( (\alpha^{j+1}(a_0)a_j - a_{j+1}\alpha^{j+1}(a_{-1}))
        (\alpha^2(a_0)\alpha(a_{i-1}) -\alpha(a_1)\alpha^2(a_{i-2}))  \\
                &  - (\alpha^2(a_0)\alpha(a_{j-1}) -\alpha(a_1)\alpha^2(a_{j-2}))
        (\alpha^{j+1}(a_0)a_i -a_{j+1}\alpha^{j+1}(a_{i-j-1}) ) ) \delta^i.  
\end{align*}
In the case $n=2$, we have
\[   a^{(1,1)} = (\alpha(a_0)a_0 -a_1\alpha(a_1) )\delta^0   \]
and in the case $n=3$,
\begin{align*}
   a^{(2,1)} = & ( (\alpha^2(a_0)a_1-a_2\alpha^2(a_2))(\alpha^2(a_0)\alpha(a_2)-\alpha(a_1)\alpha^2(a_1))  \\
     &  \qquad -(\alpha^2(a_0)\alpha(a_0)-\alpha(a_1)\alpha^2(a_2))(\alpha^2(a_0)a_0 -a_2\alpha^2(a_1)) )\delta^0.
\end{align*}

\vskip 2mm

\begin{lem}\label{lem:polynomial}
Let $a=\sum_{i=0}^{n-1} a_i\delta^i \in B_n$,  $(a_0, a_1,\ldots, a_{n-1}\in A(\D) )$.
Then $a^{(n-1,1)} =a^{(n-1,1)}_0 \delta^0$ and $a^{(n-1,1)}_0 \in A(\D)$ is a homogeneous polynomial of degree $2^{n-1}$ 
with $n^2$ variables $\alpha^i (a_j)$ $(i,j=0,1,2,\ldots,n-1)$ and contains only a term which has the form 
$\alpha^{j_1}(a_0)\alpha^{j_2}(a_0)\cdots \alpha^{j_{2^{n-1}}}(a_0)$, $(j_k \in \{0,1,2,\ldots, n-1\})$.
\end{lem}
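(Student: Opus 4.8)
The plan is to prove the statement by induction on $n$, tracking the structure of the iterated construction $a^{(k,j)}$. The key observation is that each step of the construction has the shape
\[
  a^{(k+1,j)} = a^{(k,j+1)}_j\,\delta^j * a^{(k,1)} \;-\; (\delta^j * a^{(k,1)})_j\,\delta^0 * a^{(k,j+1)},
\]
which is a commutator-like difference of two products, each of which is (a coefficient of) $a^{(k,\cdot)}$ multiplied by $a^{(k,\cdot)}$. So if, inductively, every $a^{(k,j)}_i$ is a homogeneous polynomial of degree $d_k$ in the $n^2$ variables $\alpha^p(a_q)$, then every coefficient of $a^{(k+1,j)}$ is a sum of products of two such, hence homogeneous of degree $2d_k$; together with $d_1 = 2$ (from $a^{(1,j)} = (\alpha^j(a_0)\delta^0 - a_j\delta^j)*a$, whose coefficients $\alpha^j(a_0)a_i - a_j\alpha^j(a_{i-j})$ are visibly homogeneous of degree $2$) this gives $d_k = 2^k$, and in particular $d_{n-1} = 2^{n-1}$. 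The fact that $a^{(n-1,1)} = a^{(n-1,1)}_0\delta^0$ lies in $A(\D)\delta^0$ follows from the already-established vanishing relations $a^{(k,j)}_j = \cdots = a^{(k,j)}_{j+k-1} = 0$: taking $k = n-1$, $j = 1$ forces $a^{(n-1,1)}_1 = \cdots = a^{(n-1,1)}_{n-1} = 0$.

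Next I would isolate the claim about the distinguished monomial. One must check two things: first, that the monomial $\alpha^{j_1}(a_0)\cdots\alpha^{j_{2^{n-1}}}(a_0)$ (a product of $2^{n-1}$ factors each of which is some $\alpha^{j}(a_0)$) actually occurs, and second, that it is the \emph{only} monomial built purely out of the variables $\alpha^p(a_0)$. For the first point I would again induct: in $a^{(1,j)}$ the "pure $a_0$" part of the coefficient $a^{(1,j)}_i$ is $\alpha^j(a_0)a_i$, which contributes a pure-$a_0$ term exactly when $i=0$, namely $\alpha^j(a_0)a_0$; more useful is to track, at each stage $k$, the unique monomial in $a^{(k,j)}_i$ involving only $a_0$'s and show it survives the cancellation in passing to stage $k+1$. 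The point is that in the difference defining $a^{(k+1,j)}$, the two terms have the form (pure-$a_0$ monomial from $a^{(k,j+1)}_j$) times (pure-$a_0$ monomial from $a^{(k,1)}$) minus a similar product; one must verify these two pure-$a_0$ products do \emph{not} cancel, so that a genuine pure-$a_0$ monomial remains, and simultaneously that no other pure-$a_0$ monomial is created. For the uniqueness ("contains only a term of this form" — I read this as: among the monomials that use only the $a_0$-variables, there is exactly one), the mechanism is that in $a^{(1,j)}_i = \alpha^j(a_0)a_i - a_j\alpha^j(a_{i-j})$ the second summand always involves an $a_m$ with $m\neq 0$ unless $j=0$ and $i-j=0$, but $j\ge 1$ here, so the only pure-$a_0$ contribution is $\alpha^j(a_0)a_i$ with $i=0$; induction then propagates this "each coefficient has at most one pure-$a_0$ monomial" property through the defining recursion, using that a product of two things each with at most one pure-$a_0$ monomial has at most one pure-$a_0$ monomial, and that the subtraction cannot turn two distinct pure-$a_0$ monomials into... — here care is needed — one must confirm the two subtracted pure-$a_0$ monomials either coincide (giving $0$, which would be bad) or that only one of the two terms in the difference actually carries a pure-$a_0$ monomial.

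The main obstacle will be exactly this bookkeeping of the pure-$a_0$ monomial through the nested recursion: one must set up a sufficiently strong induction hypothesis — something like "for all admissible $(k,j)$ and all $i$, the coefficient $a^{(k,j)}_i$ is homogeneous of degree $2^k$, and the sub-polynomial obtained by keeping only monomials in the variables $\{\alpha^p(a_0)\}$ is either $0$ or a single monomial, and for $(k,1)$ with the relevant index $i$ it is a nonzero single monomial" — and then verify it is preserved by the map $a^{(k,\cdot)} \mapsto a^{(k+1,\cdot)}$, being careful that the cancellation built into the commutator-type difference kills the "mixed" lower-order structure but not the pure-$a_0$ term. Homogeneity and membership in $A(\D)\delta^0$ are routine given the earlier vanishing relations; the degree count $2^{n-1}$ is immediate from $d_k = 2^{k}$; so essentially all the work is in making the pure-$a_0$-monomial claim precise and checking it survives each subtraction. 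I would also remark that since $A(\D)$ is commutative and each $\alpha^p$ is an algebra automorphism, there is no ordering ambiguity in writing the monomials, which simplifies the combinatorics.
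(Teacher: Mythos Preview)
Your approach is the same as the paper's: induction on $k$, tracking homogeneity (the degree doubles at each stage, so $d_k=2^k$) and the pure-$a_0$ monomial through the recursion. The first two claims (membership in $A(\D)\delta^0$ via the vanishing relations $a^{(n-1,1)}_1=\cdots=a^{(n-1,1)}_{n-1}=0$, and homogeneity of degree $2^{n-1}$) go exactly as you say.

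The gap is in the pure-$a_0$-monomial bookkeeping, which you correctly flag as ``the main obstacle'' but do not close. Your proposed induction hypothesis (``either $0$ or a single monomial, and for $(k,1)$ with the relevant index $i$ it is a nonzero single monomial'') is too vague to rule out the cancellation you worry about. The correct, sharper hypothesis --- which is exactly what the paper uses --- is: for every admissible $(k,j)$, the coefficient $a^{(k,j)}_0$ carries exactly one pure-$a_0$ monomial, while $a^{(k,j)}_i$ for $i\neq 0$ carries \emph{none}. With this in hand the cancellation issue evaporates. In
\[
  a^{(k+1,j)} \;=\; a^{(k,j+1)}_j\,\delta^j * a^{(k,1)} \;-\; (\delta^j*a^{(k,1)})_j\,\delta^0 * a^{(k,j+1)},
\]
the scalar factor $a^{(k,j+1)}_j$ in the first term has index $j\ge 1$, hence contains \emph{no} pure-$a_0$ monomial by the hypothesis, so the entire first product contributes nothing pure-$a_0$. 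Only the second term contributes: $(\delta^j*a^{(k,1)})_j = \alpha^j\bigl(a^{(k,1)}_0\bigr)$ has exactly one pure-$a_0$ monomial (applying $\alpha^j$ sends each $\alpha^p(a_0)$ to $\alpha^{j+p}(a_0)$), and multiplying by $a^{(k,j+1)}_i$ yields a pure-$a_0$ monomial precisely when $i=0$. Thus the dichotomy propagates, there is never a subtraction of two pure-$a_0$ monomials, and both existence and uniqueness of the distinguished monomial follow immediately. This is the missing observation that turns your outline into a proof.
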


\begin{proof}
By definition, for any $i\in \{1,2,\ldots,n-1\}$ and $j\in \{1,2,\ldots, n-1\}$, $a^{(1,j)}_0$ contains only a term 
$\alpha^{j_1}(a_0)\alpha^{j_2}(a_0)$ for some $j_1,j_2$ and $a^{(1,j)}_i$ does not contain any term of the form 
$\alpha^{j_1}(a_0)\alpha^{j_2}(a_0)$.

We assume that, for any $i\in \{1,2,\ldots,n-1\}$ and $j\in \{1,2,\ldots, n-k\}$, $a^{(k,j)}_0$ contains only a term $\alpha^{j_1}(a_0)\alpha^{j_2}(a_0)\cdots \alpha^{j_{2^k}}(a_0)$ for some $j_1,j_2,\ldots,j_{2^k}$ and $a^{(k,j)}_i$ does not contain any term of the form $\alpha^{j_1}(a_0)\alpha^{j_2}(a_0)\cdots \alpha^{j_{2^k}}(a_0)$.
Then we have, for any $i\in \{1,2,\ldots,n-1\}$ and $j\in \{1,2,\ldots, n-k-1\}$, $a^{(k+1,j)}_0$ contains only a term $\alpha^{j_1}(a_0)\alpha^{j_2}(a_0)\cdots \alpha^{j_{2^{k+1}}}(a_0)$ for some $j_1,j_2,\ldots,j_{2^{k+1}}$ and $a^{(k+1,j)}_i$ does not contain any term of the form 
$\alpha^{j_1}(a_0)\alpha^{j_2}(a_0)\cdots \alpha^{j_{2^{k+1}}}(a_0)$,
using the relation
\[   a^{(k+1,j)}  = a^{(k,j+1)}_j \delta^j*a^{(k,1)} - (\delta^j *a^{(k,1)})_j \delta^0* a^{(k,j+1)} \]
$j=1,2,\ldots,n-k-1$.

So the statement can be  proved by the induction on $k$.
\end{proof}



\begin{lem}\label{lem:approximation}
Let $a=\sum_{i=0}^{n-1} a_i\delta^i \in B_n$.
For any positive number $\epsilon$ and a finite set $S =\{z_1,z_2,\ldots, z_N\}\subset \C$,
we can choose $b=\sum_{n=0}^{n-1} b_i\delta^i \in B_n$ such that
each $b_i$ $(i=0,1,\ldots,n-1)$ is a polynomial of $z$, $\|a - b\|<\epsilon$,  and
\[   \{z\in \C: b^{(n-1,1)}_0(z) = 0 \} \cap S = \phi.   \]
\end{lem}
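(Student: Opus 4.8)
The plan is to perturb $a$ coefficient-by-coefficient into a nearby element $b$ whose coordinate functions $b_i$ are polynomials, and then to make a further arbitrarily small polynomial perturbation of $b_0$ alone so as to avoid the finitely many ``bad'' points in $S$. First I would recall that polynomials are dense in $A(\D)$ (by Fejér/Cesàro means of the Taylor series, or by the standard fact that finite sums $\sum c_k z^k$ are norm-dense in the Disk algebra); hence for any $\epsilon>0$ we can choose polynomials $b_i$ with $\sum_{i=0}^{n-1}\|a_i-b_i\|<\epsilon$, i.e. an element $b=\sum b_i\delta^i\in B_n$ with polynomial coordinates and $\|a-b\|<\epsilon$. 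So it suffices to handle the case where $a$ itself already has polynomial coordinates and to find an arbitrarily small perturbation achieving the $S$-avoidance condition; combining with the previous step (and halving $\epsilon$) then gives the lemma.

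Next I would invoke Lemma~\ref{lem:polynomial}: when the $a_i$ are polynomials, $b^{(n-1,1)}_0$ is a polynomial expression in the $\alpha^i(a_j)$, and crucially it contains exactly one monomial built purely from $\alpha$-translates of the zeroth coordinate, namely a term of the form $\alpha^{j_1}(a_0)\cdots\alpha^{j_{2^{n-1}}}(a_0)$; every other monomial involves at least one factor $\alpha^i(a_j)$ with $j\neq 0$. The idea is therefore to freeze $a_1,\dots,a_{n-1}$ and vary only $a_0$. Write $a_0^{(t)}=a_0+t$ for a complex scalar $t$ (adding a constant polynomial, so $\|a_0^{(t)}-a_0\|=|t|$, and note $\alpha^i(a_0+t)=\alpha^i(a_0)+t$ since $\alpha$ fixes constants). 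Consider the resulting value $P(t,z):=b^{(n-1,1)}_0(z)$ as a function of the parameter $t$ for each fixed $z=z_\ell\in S$. By the structure from Lemma~\ref{lem:polynomial}, $P(t,z_\ell)$ is a polynomial in $t$ whose top-degree term comes from the pure-$a_0$ monomial and equals $t^{2^{n-1}}$ (up to collecting the lower-degree contributions); in particular $t\mapsto P(t,z_\ell)$ is a nonconstant polynomial in $t$, hence has only finitely many roots.

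The finish is then a standard ``finitely many polynomials, finitely many roots'' argument: the set $\{t\in\C: P(t,z_\ell)=0\ \text{for some}\ \ell=1,\dots,N\}$ is a finite union of finite sets, hence finite, so we may pick $t$ with $|t|<\epsilon$ and $P(t,z_\ell)\neq 0$ for all $\ell$. Setting $b_0=a_0+t$ and keeping $b_i=a_i$ for $i\geq 1$ produces the desired $b$, with $\{z\in\C:b^{(n-1,1)}_0(z)=0\}\cap S=\emptyset$ and $\|a-b\|<\epsilon$. The only genuine point requiring care — and what I expect to be the main obstacle — is justifying that, after freezing the higher coordinates, $t\mapsto P(t,z_\ell)$ really is a \emph{nonconstant} polynomial: this rests entirely on Lemma~\ref{lem:polynomial}'s assertion that the monomial $\alpha^{j_1}(a_0)\cdots\alpha^{j_{2^{n-1}}}(a_0)$ occurs with a nonzero coefficient and is the unique monomial with no $a_j$ ($j\neq 0$) factor, so that its contribution of degree $2^{n-1}$ in $t$ cannot be cancelled by any other term; one should also double-check that the coefficient of this leading term (a product of the scalars into which the evaluations of the $\alpha^i(a_j)$, $j\neq0$, enter — in fact just a nonzero constant) does not itself vanish identically, which again follows from the explicit recursive construction of the $a^{(k,j)}$.
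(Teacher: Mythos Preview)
Your proposal is correct and follows essentially the same approach as the paper: first approximate the coordinates by polynomials, then perturb only the constant term of the zeroth coordinate $a_0$ and use Lemma~\ref{lem:polynomial} to control the resulting $b^{(n-1,1)}_0$. The only difference is in the final step: the paper argues that the constant term of $c^{(n-1,1)}_0$ is $C^{2^{n-1}}+(\text{lower order in }C)$ and then invokes continuous dependence of the zeros on the coefficients, whereas you fix each point $z_\ell\in S$ and observe directly that $t\mapsto P(t,z_\ell)$ is a polynomial of exact degree $2^{n-1}$ in $t$ (hence has finitely many roots), which is a cleaner and more explicit version of the same idea.
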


\begin{proof}
We can choose $c=\sum_{i=0}^{n-1} c_i\delta^i \in B_n$ such that $c_i$ $(i=0,1,\ldots, n-1)$ is
a polynomial on $z$ in $A(\D)$ and $\|a-c\|<\epsilon/2$.
Let $C$ be a constant term of polynomial $c_0$.
By Lemma~\ref{lem:polynomial}, the constant term of the polynomial $c^{(n-1,1)}_0$ has the following form:
\[   C^{2^{n-1}} + (\text{ each term contains } C^i \; (0\le i <2^{n-1}) ).  \]
Since the solutions of $c^{(n-1,1)}_0(z)=0$ move continuously with respect to coefficients of $c_0^{(n-1,1)}$, 
we can choose a polynomial $b_0$ satisfying $\|b_0-c_0\|<\epsilon/2$
and
\[  \{z\in \C: b^{(n-1,1)}_0(z) = 0 \} \cap S = \phi ,  \]
where $b=\sum_{i=0}^{n-1} b_i \delta^i \in B_n$ and $b_i=c_i$ $(i=1,2,\ldots, n-1)$. 
Then $\|a-b\| \le \|a-c\|+\|c-b\|<\epsilon$.
\end{proof}


\vspace{5mm}

\begin{thm}\label{thm:disc algebra}
\[  {\rm ltsr}(\ell^1(\Z/n\Z, A(\D), \alpha)) =2.  \]
\end{thm}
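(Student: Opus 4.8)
The plan is to prove both inequalities $\ltsr(B_n) \le 2$ and $\ltsr(B_n) \ge 2$. The lower bound is easy: $B_n$ contains $A(\D)$ as a (complemented) subalgebra, and more importantly admits a quotient onto $A(\D)$ (send $\delta^i \mapsto 1$, or use the augmentation), and $\ltsr$ does not increase under quotients; since $\ltsr(A(\D)) = 2$ we get $\ltsr(B_n) \ge 2$. Alternatively, since $B_n$ is commutative only when $\alpha$ is trivial, it is cleanest to argue that $B_n$ is not a division-ring-like algebra, so $\ltsr(B_n) \ge 2$ because $\ltsr = 1$ would force the invertible elements to be dense in $B_n$, which already fails in the quotient $A(\D)$. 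So the substance is the upper bound $\ltsr(B_n) \le 2$, i.e.\ that $Lg_2(B_n)$ is dense in $B_n^2$.

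For the upper bound I would proceed as follows. Fix $(a, a') \in B_n^2$ and $\varepsilon > 0$; the goal is to perturb $(a,a')$ to a pair generating $B_n$ as a left ideal. The key device is the map $a \mapsto a^{(n-1,1)}$ constructed before Lemma~\ref{lem:polynomial}: it is obtained from $a$ by left-multiplications and subtractions, so $a^{(n-1,1)} \in B_n * a$, and it lies in $A(\D)\delta^0$, with scalar component $a^{(n-1,1)}_0 \in A(\D)$. First, using Lemma~\ref{lem:approximation}, replace $a$ by a nearby $b$ (with polynomial coefficients) such that $b^{(n-1,1)}_0$ has no zeros on a prescribed finite set $S \subset \C$; I will choose $S$ to be the (finite) set of points of $\overline{\D}$ where $a'$, suitably reduced, could obstruct left-invertibility — more precisely, after the first perturbation $b^{(n-1,1)}_0$ is a nonzero element of $A(\D)$, hence has only finitely many zeros $z_1,\dots,z_N$ in $\overline{\D}$, and these are the only points of the maximal ideal space of (the relevant matrix model of) $B_n$ at which the first coordinate fails to be invertible. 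Then I perturb $a'$ slightly to $b'$ so that $b'$ is invertible at each $z_k$; since $A(\D)$ has $\ltsr 2$ (equivalently, dense invertibles among pairs) and the conditions "$b'$ invertible at $z_1,\dots,z_N$" are finitely many open dense conditions on the finitely many coefficients involved, this is possible by a small perturbation. Finally I check that $(b, b')$ now generates $B_n$ as a left ideal: using the identification of $B_n$ with a closed subalgebra of $A(\D) \otimes M_n(\C)$ from Example~\ref{exa:ell} (via $\pi$), left-invertibility of the pair can be tested pointwise on $\overline{\D}$ after applying $\pi$; at points where $b^{(n-1,1)}_0 \ne 0$ the element $b$ (through the relation $b^{(n-1,1)} \in B_n * b$ and $b^{(n-1,1)}_0(z)\ne 0$) already generates, and at the finitely many bad points $z_k$ the element $b'$ generates by construction. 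Hence $\pi(b)$ and $\pi(b')$ together generate $A(\D)\otimes M_n(\C)$ fiberwise, which gives $(b,b') \in Lg_2(B_n)$.

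The main obstacle I expect is the gluing/pointwise argument in the last step: one must convert "pointwise left-invertibility on $\overline{\D}$" into an honest left ideal identity in $B_n$ (not merely in $A(\D)\otimes M_n(\C)$), i.e.\ one needs that $\pi(B_n)$ is a full corner or is invariant under the relevant left-module operations so that the witnesses $x, y$ with $xb + yb' = 1$ can be taken inside $B_n$ rather than just inside the ambient matrix algebra. This is where the precise structure of $\pi$ and of the conditional expectation $E$ matters, and where I would lean on Theorem~\ref{thm:basic} (applied to the inclusion $A(\D) \subset B_n$ with quasi-basis $\{(\delta^g,\delta^{g^{-1}})\}$) together with $\ltsr(A(\D)) = 2$ to get $\ltsr(B_n) \le \ltsr(A(\D)) + n - 1 = n+1$ as a crude bound, and then improve it to $2$ by exploiting that $a^{(n-1,1)}$ collapses the "bad set" to a finite set, so that effectively only one extra generator (namely $b'$) beyond the base is needed rather than $n-1$. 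A secondary technical point is verifying that the zeros of $b^{(n-1,1)}_0$ depend continuously on the coefficients (used in Lemma~\ref{lem:approximation}) and that they lie in $\overline{\D}$, controlled so that the perturbations of $a$ and of $a'$ do not interfere — this is handled by performing the $a$-perturbation first, fixing $S = \{z_1,\dots,z_N\}$, and only then perturbing $a'$.
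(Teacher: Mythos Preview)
Your lower-bound argument has a real error. The map ``send $\delta^i\mapsto 1$'' is not multiplicative unless $\alpha$ is trivial: from $\delta a=\alpha(a)\delta$ you would need $a=\alpha(a)$ for all $a\in A(\D)$. More generally, when $\omega$ is a primitive $n$-th root of unity $B_n$ has \emph{no} quotient onto $A(\D)$: any character of $B_n$ restricts to evaluation at some $z_0\in\overline{\D}$ on $A(\D)$, and the relation $\delta a\delta^{-1}=\alpha(a)$ forces $a(z_0)=a(\omega z_0)$ for all $a$, hence $z_0=0$; thus the abelianization of $B_n$ is finite-dimensional and cannot surject onto $A(\D)$. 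The paper proves $\ltsr(B_n)>1$ by a topological argument instead: assuming invertibles are dense, perturb $z\delta^0$ to an invertible $b\in B_n$; then $\det\pi(b)$ is nowhere zero on $\overline{\D}$, so $z\mapsto\det\pi(b)(z)/|\det\pi(b)(z)|$ on $\T$ is nullhomotopic, yet it is homotopic (through the segment $(1-t)z\delta^0+tb$) to $z\mapsto\det\pi(z\delta^0)(z)/|\cdots|$, which has winding number $n$ --- a contradiction.

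Your upper-bound strategy also has a genuine gap, exactly at the step you flag. After arranging that $b^{(n-1,1)}_0$ vanishes only at $z_1,\dots,z_N$ and that $\pi(b')(z_k)\in GL_n(\C)$, you still need $c,c'\in B_n$ with $c*b+c'*b'=1$. Fiberwise generation in $A(\D)\otimes M_n(\C)$ does not give this: there are no holomorphic partitions of unity, so the usual gluing of local left-inverses over neighborhoods of the $z_k$ and their complement cannot be performed inside $A(\D)$, and $\pi(B_n)$ is a proper subalgebra of $A(\D)\otimes M_n(\C)$, so witnesses found there need not lie in $B_n$. The paper sidesteps this entirely by applying the $(\,\cdot\,)^{(n-1,1)}$ construction to \emph{both} coordinates. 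Using Lemma~\ref{lem:approximation} twice, one perturbs the given pair to $a,b\in B_n$ with polynomial coefficients so that the finite zero sets of $a^{(n-1,1)}_0$ and $b^{(n-1,1)}_0$ in $\C$ are disjoint. These are then coprime in $\C[z]$, so B\'ezout gives polynomials $p,q$ with $p\,a^{(n-1,1)}_0+q\,b^{(n-1,1)}_0=1$; since $a^{(n-1,1)}\in B_n*a$ and $b^{(n-1,1)}\in B_n*b$, one obtains directly $\delta^0=(p\delta^0)*a^{(n-1,1)}+(q\delta^0)*b^{(n-1,1)}\in B_n*a+B_n*b$, with no pointwise or gluing argument needed.
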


\begin{proof}
Set $B_n = \ell^1(\Z/n\Z, A(\D), \alpha)$.
At first we show that ${\rm ltsr}(B_n)>1$.
By the identification of $B_n$ with $A(\D)\otimes M_n(\C)$ using $\pi$,
$B_n$ can be embedded in $C(\bar{\D}, M_n)$ as follows:
\[   B_n \ni b =\sum_{i=0}^{n-1} b_i\delta^i \mapsto \pi(b)(z) = \sum_{i,j=0}^{n-1}\alpha^i(b_{j-i})(z)e_{i,j} \in M_n(\C) \]
for any $z\in \bar{\D}$. 
We choose an element $a\in A(\D)$ as a function $a(z)=z$ and a positive number $\epsilon$ 
satisfying with $(1-\epsilon)^2>n!\epsilon^2$.
If we assume that  ${\rm ltsr}(B_n)=1$, then there exists a $b=\sum_{i=0}^{n-1} b_i \delta^i \in B_n$ 
such that 
\[  \| b - a\delta^0\| = \|b_0-a\|+ \sum_{i=1}^{n-1} \|b_i\| < \epsilon \]
and $c*b=\delta^0$ for some $c\in B_n$.

When $|z|=1$, we have
$|a(z)|=1$, $ |b_0(z)|>1-\epsilon$, $|b_i(z)|<\epsilon$   $(i=1,2,\ldots, n-1)$.
So $\det( \pi(b)(z) ) \neq 0$, for any $|z|=1$.
Since $\pi(c*b)(z) = (\pi(c)\pi(b) )(z)=\pi(c)(z) \pi(b)(z)$, 
we also have
\[    \det(\pi(c)(z)) \det(\pi(b)(z)) = \det(\pi(\delta^0)(z)) = 1 \quad (z\in \D) .  \]
That is,  $\det(\pi(b)(z)) \neq 0$ for any $z\in \bar{\D}$.
We define a family of continuous functions on $\T=\{z\in\C: |z|=1\}$ to $\T$ as follows:
\begin{align*}
   f_t(z) & = \frac{\det( \pi( (1-t)a + tb)(z))}{ |\det( \pi( (1-t)a + tb)(z))|}, \quad t\in [0,1]  \\
   g_s(z) &= \frac{ \det( \pi(b)(sz) ) }{|\det( \pi(b)(sz) )|}, \quad s \in [0,1], 
\end{align*}
for any $z\in \T$.
Since $f_1=g_1$, $f_0$ is homotopic to $g_0$.
The winding number of $f_0$ is equal to $n$.
So it is a contradiction that $f_0$ is homotopic to the constant function $g_0$.

We shall prove ${\rm ltsr}(B_n)\le 2 $.

Let $x, y\in B_n$ and $\epsilon$ be any positive number.
We can choose $\tilde{x} = \sum_{i=0}^{n-1}\tilde{x}_i\delta^i$, $\tilde{y} = \sum_{i=0}^{n-1}\tilde{y}_i\delta^i \in B_n$ such that $\|x-\tilde{x}\|<\epsilon/2$, $\|y-\tilde{y}\|<\epsilon/2$ and
$\tilde{x}_i, \tilde{y}_i \in A(\D)$ are polynomials in z ($i=0,1,2,\ldots,n-1$).
By Lemma~\ref{lem:approximation} there exist $a, b \in B_n$ such that $\|a-\tilde{x}\|<\epsilon/2$, 
$\|b-\tilde{y}\|<\epsilon/2$, 
$a_i, b_i$ are polynomials of $z$ ($i=0,1,2,\ldots,n-1$), and
\[  \{ z\in \C : a^{(n,1)}_0 = 0 \} \cap \{ z\in \C : b^{(n,1)}_0=0 \} = \phi.  \]
So we have $\|x-a\|<\epsilon$, $\|y-b\|<\epsilon$ and
\[   a^{(n,1)} =a^{(n,1)}_0 \delta^0 \in B_n*a  \text{ and }
      b^{(n,1)} =b^{(n,1)}_0 \delta^0 \in B_n*b.  \]

Hence, there exist polynomials $p, q$ such that
$pa^{(n,1)}_0 + qb^{(n,1)}_0 =1$.
So we have
\[   \delta^0 = p\delta^0 * a^{(n,1)} + q\delta^0 * b^{(n,1)} \in B_n*a + B_n*b.   \]
This means ${\rm ltsr}(B_n)\le 2$. 

Therefore, we conclude that $\mathrm{ltsr}(B_n) = 2$.
\end{proof}

\vskip 2mm

Let $\D=\{z\in \C : |z|<1\}$ and $\varphi(z)$ be a mapping from $\D$ to $\D$.
If $\varphi(z)$ is a holomorphic function on $\D$ and a bijection from $\D$ to $\D$,
then it is known that $\varphi(z)$ has the following form:
\begin{align*}
   \varphi(z) & = e^{i\theta}\frac{z-\gamma}{1-\bar{\gamma}z} \quad \text{ for some }
 | \gamma|<1 \text{ and }  \theta\in \R  \\
                & = \frac{az+b}{\bar{b}z+\bar{a}} \quad \text{ for some } a,b\in \C \text{ with } |a|^2-|b|^2=1 .  
\end{align*}
So we can define $\alpha_\varphi \in {\rm Aut}(A(\D))$ as follows:
\[   \alpha_\varphi(f)(z) =f(\varphi(z)) \quad f\in A(\D), z\in \D,  \]
where $\alpha \in {\rm Aut}(A(\D))$ means that $\alpha: A(\D)\longrightarrow A(\D)$ is 
an algebra isomorphism and $\alpha$ and $\alpha^{-1}$ are continuous.

Conversely, for any $\varphi\in {\rm Aut}(A(\D))$ there exists a holomorphic bijection $\varphi$ on $\D$ such that
$\alpha = \alpha_{\varphi}$ (see, \cite{Hoffman 1962}, Chapter 9),

\vskip 2mm

We consider the continuous group
\[  SU(1,1) = \left\{ \begin{pmatrix} a & b \\ \bar{b} & \bar{a} \end{pmatrix} : a,b\in \C, |a|^2-|b|^2=1 \right\}  \]
and its compact subgroup
\[  U(1) = \left\{ \begin{pmatrix} a & 0 \\ 0 & \bar{a} \end{pmatrix} : a\in \C, |a|=1 \right\} . \]
For $g = \begin{pmatrix} a & b \\ \bar{b} & \bar{a} \end{pmatrix} \in SU(1,1)$, we define $\varphi_g$ and $\alpha_g$ as follows:
\[  \varphi_g(z) = \frac{az+b}{\bar{b}z+\bar{a}}  \text{ and } \alpha_g = \alpha_{\varphi_g} \in {\rm Aut}(A(\D)) . \]
We also define a continuous representation $\pi$ of $SU(1,1)$ on the $2$ dimensional real Hilbert space $\R^2$ as follows:
\[   \pi(g) = \begin{pmatrix} 1 & -i \\ 1 & i \end{pmatrix}^{-1} g  \begin{pmatrix} 1 & -i \\ 1 & i \end{pmatrix}
             = \frac{1}{2} \begin{pmatrix} 1 & 1 \\ i & -i \end{pmatrix} g  \begin{pmatrix} 1 & -i \\ 1 & i \end{pmatrix} \in M_2(\R)  \]
for $g\in SU(1,1)$.
We can get the following result by direct calculation:

\begin{lem}\label{lem:su-sl}
The representation $\pi$ is bijective from $SU(1,1)$ to \\
$SL(2,\R) = \left\{ \begin{pmatrix} s & t \\ u & v \end{pmatrix} : s,t,u,v \in \R, sv-tu=1\right\}$ and
also bijective from $U(1)$ to $SO(2) = \left\{ \begin{pmatrix} \cos \theta & -\sin \theta \\ \sin \theta & \cos \theta \end{pmatrix} : \theta\in \R \right\}$.
\end{lem}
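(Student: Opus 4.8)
The plan is to prove everything by one explicit matrix computation, exploiting that $\pi$ is simply conjugation by the fixed invertible matrix $C=\begin{pmatrix}1&-i\\1&i\end{pmatrix}$, whose inverse is $C^{-1}=\tfrac12\begin{pmatrix}1&1\\i&-i\end{pmatrix}$. Consequently $\pi$ is an injective group homomorphism of $GL(2,\C)$ into itself which preserves determinants; so injectivity of $\pi$ on $SU(1,1)$ and on $U(1)$ is automatic, and the whole content is to identify the two images.

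First I would compute $\pi(g)$ for a general $g=\begin{pmatrix}a&b\\\bar b&\bar a\end{pmatrix}\in SU(1,1)$. Writing $a=a_1+ia_2$ and $b=b_1+ib_2$ with $a_j,b_j\in\R$ and multiplying out $C^{-1}gC$, the imaginary parts in the off-diagonal entries cancel and one gets
\[
   \pi(g)=\begin{pmatrix}a_1+b_1 & a_2-b_2\\ -(a_2+b_2) & a_1-b_1\end{pmatrix}\in M_2(\R),
\]
whose determinant is $(a_1^2-b_1^2)+(a_2^2-b_2^2)=|a|^2-|b|^2=1$; hence $\pi(SU(1,1))\subseteq SL(2,\R)$. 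For surjectivity, given $\begin{pmatrix}s&t\\u&v\end{pmatrix}\in SL(2,\R)$ I would read the displayed formula as a linear system and put $a=\tfrac12\big((s+v)+i(t-u)\big)$, $b=\tfrac12\big((s-v)-i(t+u)\big)$; running the determinant computation in reverse gives $|a|^2-|b|^2=sv-tu=1$, so $g:=\begin{pmatrix}a&b\\\bar b&\bar a\end{pmatrix}$ lies in $SU(1,1)$ and $\pi(g)$ is the prescribed matrix. Combined with the injectivity already noted, this yields the bijection $\pi\colon SU(1,1)\to SL(2,\R)$.

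For the compact subgroups I would specialize the same formula to $b=0$, $a=e^{i\theta}$, that is $a_1=\cos\theta$, $a_2=\sin\theta$, $b_1=b_2=0$, obtaining $\pi(g)=\begin{pmatrix}\cos\theta & \sin\theta\\ -\sin\theta & \cos\theta\end{pmatrix}$. As $\theta$ runs over $\R$ these matrices are exactly the elements of $SO(2)$ (it is merely the reparametrization $\theta\mapsto-\theta$ of the description of $SO(2)$ used above), and $e^{i\theta}\mapsto\pi(g)$ is visibly a bijection from $U(1)$ onto $SO(2)$; together with injectivity of $\pi$ this shows $\pi$ restricts to a bijection $U(1)\to SO(2)$.

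There is no genuine obstacle: the proof is pure bookkeeping. The only points needing a little care are that the product $C^{-1}gC$ really does come out with real entries (the $i$'s in the off-diagonal slots must cancel), and the observation that the single identity $\det\pi(g)=|a|^2-|b|^2$ is exactly what drives both the inclusion $\pi(SU(1,1))\subseteq SL(2,\R)$ and its converse.
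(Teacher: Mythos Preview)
Your proof is correct and is precisely the ``direct calculation'' the paper alludes to but omits: the paper states only that the lemma follows by direct computation, and you have carried out that computation explicitly and accurately, including the surjectivity step and the identification of $\pi(U(1))$ with $SO(2)$ up to the harmless reparametrization $\theta\mapsto-\theta$.
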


\begin{lem}\label{lem:compact}
Let $K$ be a compact subgroup of $SU(1,1)$. 
Then there exists an element $h\in SU(1,1)$ such that $h^{-1}Kh\subset U(1)$.

In particular, any finite subgroup of $SU(1,1)$ is cyclic. 
\end{lem}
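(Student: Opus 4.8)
The plan is to exploit the action of $SU(1,1)$ on the unit disk $\D$ by the M\"obius transformations $\varphi_g$, which preserves the Poincar\'e metric $\rho$ coming from $ds=2|dz|/(1-|z|^2)$. Equipped with $\rho$ the disk is a complete, simply connected surface of constant negative curvature, hence a Hadamard (CAT$(0)$) space; in particular, for each $p\in\D$ the function $z\mapsto\rho(z,p)^2$ is proper on $\D$ and uniformly convex along $\rho$-geodesics, with a convexity modulus independent of $p$, and each $\varphi_g$ is a $\rho$-isometry.

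First I would produce a point of $\D$ fixed by every element of $K$. Fix any $w\in\D$ and set
\[
   f(z)=\int_K \rho\bigl(z,\varphi_k(w)\bigr)^2\,dk,
\]
the integral taken against the normalized Haar measure on the compact group $K$. As an average of proper, uniformly convex functions, $f$ is itself proper and uniformly convex, so it attains its infimum at a unique point $z_0\in\D$. For $k_0\in K$, applying the isometry $\varphi_{k_0}^{-1}=\varphi_{k_0^{-1}}$ and the left invariance of Haar measure gives $f(\varphi_{k_0}(z))=f(z)$ for all $z\in\D$; hence $\varphi_{k_0}(z_0)$ is again the unique minimizer of $f$, so $\varphi_{k_0}(z_0)=z_0$. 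Thus $\varphi_k(z_0)=z_0$ for all $k\in K$.

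Then I would move $z_0$ to the origin. The element
\[
   h=\frac{1}{\sqrt{1-|z_0|^2}}\begin{pmatrix}1&z_0\\ \bar z_0&1\end{pmatrix}\in SU(1,1)
\]
satisfies $\varphi_h(0)=z_0$, so $\varphi_{h^{-1}kh}(0)=0$ for every $k\in K$. Since $\varphi_g(0)=b/\bar a$ for $g=\begin{pmatrix}a&b\\ \bar b&\bar a\end{pmatrix}$, the stabilizer of $0$ in $SU(1,1)$ is precisely $\{g: b=0\}=U(1)$; therefore $h^{-1}Kh\subset U(1)$. Finally, a finite subgroup $K\subset SU(1,1)$ is compact, hence by the above is conjugate to a finite subgroup of $U(1)$, which under $g\mapsto a$ is a finite subgroup of the circle group $\T$; every finite subgroup of $\T$ is cyclic, so $K$ is cyclic.

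I expect the only substantive point to be the first step --- namely the use of the fact that $(\D,\rho)$ is nonpositively curved, so that $z\mapsto\rho(z,p)^2$ is uniformly convex and $f$ is proper with a unique minimizer. This is the classical ingredient that makes the center-of-mass (circumcenter) construction work; it can equally be replaced by a direct appeal to Cartan's fixed point theorem for compact isometry groups of Hadamard spaces, or, via Lemma~\ref{lem:su-sl}, by the standard conjugacy of compact subgroups of $SL(2,\R)$ into $SO(2)$ using the action on the upper half-plane. The remaining verifications (the explicit form of $h$ and the identification $\mathrm{Stab}(0)=U(1)$) are immediate computations.
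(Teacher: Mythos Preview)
Your proof is correct but follows a genuinely different route from the paper. The paper transports the problem to $SL(2,\R)$ via Lemma~\ref{lem:su-sl} and applies Weyl's unitarian trick: it averages the standard inner product on $\R^2$ over the Haar measure of $K$ to obtain a $K$-invariant inner product, represented by a positive matrix $T$; then $T^{1/2}\pi(g)T^{-1/2}\in SO(2)$ for all $g\in K$, and after rescaling $T^{-1/2}$ to have determinant $1$ one pulls back to get $h\in SU(1,1)$ with $h^{-1}Kh\subset U(1)$. Your argument instead works directly with the conformal action on $(\D,\rho)$ and produces a common fixed point by the Cartan barycenter construction, then conjugates that point to $0$ by an explicit M\"obius map. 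Both are classical; the paper's proof is purely linear-algebraic and avoids any appeal to the geometry of nonpositive curvature, while yours is more self-contained in that it never leaves $SU(1,1)$ or invokes Lemma~\ref{lem:su-sl}, and it makes the conjugating element $h$ explicit. You in fact anticipate the paper's method in your closing remark about the alternative via $SL(2,\R)$ and $SO(2)$.
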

\begin{proof}
Let $\langle \cdot | \cdot \rangle$ be an usual inner product on $\R^2$.
Since the restriction $\pi|_K$ of $\pi$  is a bounded representation of $K$ on $\R^2$,
we can define new inner product on $\R^2$ as follows:
\[   [\xi|\eta] = \int_{g\in K} \langle \pi(g)\xi | \pi(g)\eta \rangle d\mu(g),   \]
where $\mu$ is the Haar measure on $K$ and $\xi, \eta \in \R^2$.
So there exists a positive $T\in GL(2,\R)$ such that $[\xi|\eta]=\langle T\xi | \eta \rangle$ for any $\xi,\eta \in \R^2$.
By definition we have $[\pi(g)\xi|\pi(g)\eta] =[\xi|\eta]$ for any $g\in K$.
This implies $T^{1/2}\pi(g)T^{-1/2} \in SO(2)$ for $g\in K$.
Put $S=T^{-1/2}/\sqrt{\det T^{-1/2}} \in SL(2,\R)$.
Since $S^{-1}\pi(g)S \in SO(2)$, there exists $h\in SU(1,1)$ with $\pi(h)=S$ and $h^{-1}Kh \subset U(1)$ by Lemma~\ref{lem:su-sl}.

The rest part is clear.
\end{proof}

\begin{thm}\label{thm:finite group}
Let $K$ be a finite group and $\alpha$ an action of $K$ on ${\rm Aut}A(\D)$. 
Then we have
\[  {\rm ltsr}(\ell^1(K, A(\D), \alpha)) =2.  \]
\end{thm}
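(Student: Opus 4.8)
The plan is to reduce the statement to the cyclic case of Theorem~\ref{thm:disc algebra} by exploiting that the holomorphic automorphism group of $\D$ has only cyclic finite subgroups.

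\emph{Reduction.} Every $\alpha^g$ is an automorphism of $A(\D)$, hence of the form $\alpha^g=\alpha_{\varphi_g}$ for a holomorphic bijection $\varphi_g$ of $\D$, and $\{[\varphi_g]:g\in K\}$ is a finite subgroup of the group of holomorphic automorphisms of $\D$, identified with $SU(1,1)/\{\pm 1\}$. Lifting to $SU(1,1)$ and applying Lemma~\ref{lem:compact}, this subgroup is cyclic and some $h\in SU(1,1)$ conjugates it into the rotation subgroup $U(1)/\{\pm1\}$. Conjugating the action by the disk automorphism $\alpha_h$ (i.e.\ replacing $\alpha^g$ by $\alpha_h\alpha^g\alpha_h^{-1}$) gives a Banach-algebra isomorphism of $\ell^1(K,A(\D),\alpha)$ onto $\ell^1(K,A(\D),\alpha')$ via $\sum_g f_g\delta^g\mapsto\sum_g\alpha_h(f_g)\delta^g$, and $\ltsr$ is an isomorphism invariant, so I may assume $\alpha^g(f)(z)=f(\omega^{\rho(g)}z)$ for a root of unity $\omega$ and a surjection $\rho\colon K\to\Z/m\Z$. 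Put $N=\ker\rho$; then $\alpha|_N=\mathrm{id}$ and $K/N$ is cyclic.

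\emph{Clifford decomposition and conclusion.} Since $\alpha|_N=\mathrm{id}$, the closed subalgebra of $\ell^1(K,A(\D),\alpha)$ generated by $A(\D)$ and $N$ is $A(\D)\otimes\C[N]$; split $\C[N]$ into its Wedderburn blocks, indexed by $\widehat N$, on which $K$ acts by conjugation. For an orbit with representative $\chi\in\widehat N$ and stabiliser $K_\chi\supseteq N$, the sum of the block idempotents over the orbit is a central idempotent $E_\chi$ of $\ell^1(K,A(\D),\alpha)$, with $\sum_\chi E_\chi=1$, so $\ell^1(K,A(\D),\alpha)=\bigoplus_\chi E_\chi\,\ell^1(K,A(\D),\alpha)\,E_\chi$. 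The usual Clifford/Mackey analysis shows each corner carries a system of matrix units of size $N_\chi=[K:K_\chi]\dim\chi$ whose reduced algebra is a twisted $\ell^1$-crossed product $\ell^1(K_\chi/N,A(\D),\widetilde\alpha,\sigma_\chi)$, where $\widetilde\alpha$ is the restriction to the cyclic group $K_\chi/N$ of the rotation action of $K/N$ and $\sigma_\chi\in H^2(K_\chi/N,\C^\times)$. Because $K_\chi/N$ is cyclic, $H^2(K_\chi/N,\C^\times)=0$, so $\sigma_\chi$ is a coboundary and can be absorbed by rescaling the implementing elements, giving $e_{11}\bigl(E_\chi\,\ell^1(K,A(\D),\alpha)\,E_\chi\bigr)e_{11}\cong\ell^1(K_\chi/N,A(\D),\widetilde\alpha)$, a rotation $\ell^1$-crossed product of exactly the type treated in Theorem~\ref{thm:disc algebra} (non-primitive $\omega$ allowed). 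Hence $\ltsr\bigl(\ell^1(K_\chi/N,A(\D),\widetilde\alpha)\bigr)=2$, and Corollary~\ref{cor:matrix}, which supplies both the equivalence with an $\ell^1$-type norm and Rieffel's matrix formula (Lemma~\ref{lem:tsr matrix}), yields $\ltsr\bigl(E_\chi\,\ell^1(K,A(\D),\alpha)\,E_\chi\bigr)=\lceil(2-1)/N_\chi\rceil+1=2$. Since the left topological stable rank of a finite direct sum of unital Banach algebras is the maximum over the summands, $\ltsr(\ell^1(K,A(\D),\alpha))=2$; the argument for $\rtsr$ is identical.

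\emph{The main obstacle.} The substantive step is performing the Clifford decomposition above for a \emph{twisted group ring with coefficients in the non-self-adjoint Banach algebra} $A(\D)$, rather than for a finite-group $C^*$-crossed product: one must check the orbit/stabiliser bookkeeping, the identification of the reduced corner, and the norm equivalences in this $\ell^1$ setting, and verify that the obstruction cocycle genuinely lives in $H^2$ of the \emph{cyclic} quotient $K_\chi/N$ — precisely the feature (triviality of the Schur multiplier of a cyclic group) that makes a non-abelian $K$ no harder than a cyclic one. Once this is in place the lower bound $\ltsr\ge 2$ is automatic, since every summand has $\ltsr=2$; alternatively it follows directly, as in the first half of the proof of Theorem~\ref{thm:disc algebra}, by embedding $\ell^1(K,A(\D),\alpha)$ into $C(\overline\D,M_{|K|})$ through the map $\pi$ of Example~\ref{exa:ell} and noting that the determinant of the image of $z\,\delta^e$ is $\prod_{g\in K}\varphi_g(z)$, with winding number $|K|\ge 1$ on $\T$, so $z\,\delta^e$ is not a limit of left invertibles. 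A more elementary but, I expect, more laborious route to the upper bound is to generalise the elimination scheme $a\mapsto a^{(n-1,1)}$ of Theorem~\ref{thm:disc algebra}: the single move $a\mapsto(\alpha^g(a_e)\delta^e-a_g\delta^g)*a$ still annihilates the $\delta^g$-component for any $g\in K$ by commutativity of $A(\D)$, but replacing the ``consecutive block'' bookkeeping peculiar to $\Z/n\Z$ by a scheme valid for an arbitrary finite group, while keeping all coefficients polynomial and retaining a pure power of $a_e$, requires a new combinatorial argument.
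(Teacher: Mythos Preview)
Your strategy is sound, and its structural insight is correct: once the action is conjugated into rotations, the quotient $K/N$ is cyclic, so the Mackey obstruction attached to each Clifford block lives in $H^2$ of a cyclic group and hence vanishes, reducing every block to Theorem~\ref{thm:disc algebra} via Corollary~\ref{cor:matrix}.  The paper, however, takes a much shorter route.  It observes that since every automorphism of $A(\D)$ comes from a M\"obius transformation, a finite $K$ acting on $A(\D)$ sits inside $PSU(1,1)$; Lemma~\ref{lem:compact} then forces $K$ \emph{itself} to be cyclic, and a single conjugation by $\alpha_h$ carries $\ell^1(K,A(\D),\alpha)$ isomorphically onto a rotation crossed product $\ell^1(\Z_{|K|},A(\D),\beta)$ of exactly the type in Theorem~\ref{thm:disc algebra}.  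There is no Clifford theory, no block decomposition, no matrix-unit reduction, and no appeal to Corollary~\ref{cor:matrix}.

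What your approach buys is genuine extra generality: the paper's argument tacitly identifies $K$ with its image in $\mathrm{Aut}(A(\D))$ and so treats only faithful actions, whereas your decomposition handles an arbitrary kernel $N=\ker\rho$ (for instance $K=S_3$ acting through the sign character).  The price is the Clifford/Mackey bookkeeping you yourself flag as the main obstacle --- constructing the matrix units of size $[K:K_\chi]\dim\chi$, identifying the reduced corner with an untwisted $\ell^1$-crossed product, and checking the norm equivalences needed for Corollary~\ref{cor:matrix}, all in the non-self-adjoint $\ell^1$ setting rather than the familiar $C^*$ one.  Under the paper's implicit faithfulness hypothesis this apparatus is unnecessary and the proof collapses to two lines; without that hypothesis your route, or something equivalent, is what is actually required, and the details you defer are then the real content of the extension.
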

\begin{proof}
Since $K$ is a subgroup of $PSU(1,1) = SU(1,1)/\{\pm\}$, there exists a subgroup $H$ of $SU(1,1)$ such that $H/\{\pm\} =K$.
So we may assume $K$ is a cyclic subgroup $\langle k \rangle \cong \Z/|K|\Z (= \Z_{|K|})$ of $SU(1,1)$ and $h^{-1} k h = j \in U(1)$ by Lemma~\ref{lem:compact}.

We define automorphisms of $A(\D)$ as follows:
\[   \alpha =\alpha_k, \; \beta=\alpha_j, \; \gamma = \alpha_h .  \]
Then we have $\alpha^{|K|}=\beta^{|K|}={\rm id}$ and $\alpha \gamma = \gamma \beta$.
We remark that there exists $\omega \in \C$ such that $\omega^{|K|}=1$ and
\[  \beta(f)(z) =  \sum_{l=1}^\infty c_l\omega^lz^l  \text{ for }  f(z)=\sum_{l=0}^\infty c_lz^l \in A(\D). \]
Then $\ell^1(\Z_{|K|}, A(\D), \alpha)$ and $\ell^1(\Z_{|K|}, A(\D), \beta)$ is isomorphic by the correspondence
\[  \ell^1(\Z_{|K|}, A(\D), \alpha) \ni \sum_{g\in \Z_{|K|}} f_g \delta^g \mapsto \sum_{g\in \Z_{|K|}} \gamma^{-1}(f_g) \tilde{\delta^g} \in \ell^1(\Z_{|K|}, A(\D), \beta). \]
Since $\ell^1(K, A(\D), \alpha) = \ell^1(\Z_{|K|}, A(\D), \alpha) \cong \ell^1(\Z_{|K|}, A(\D), \beta)$ and
${\rm ltsr}(\ell^1 (\Z_{|K|}, A(\D), \beta))=2$ by Theorem~\ref{thm:disc algebra},
we have $ {\rm ltsr}(\ell^1(K, A(\D), \alpha)) =2$.
\end{proof}

%
%
%



     
 \vskip 2mm
 
 \section{Acknowledgements}
The authors would like to thank Prof. A. J. Izzo for giving us  the information about a result for $\mathrm{Aut}(A(\D))$ in \cite{Hoffman 1962} and Prof. H. Nozawa for pointing out the algebraic structure of finite group actions on $\D$.
 The second author would like to thank Tamotsu Teruya for discussion about finite depth.
 The second author's research is supported by KAKENHI grant No. JP20K03644.





\begin{thebibliography}{99}

\bibitem{blackadar 1990} B. Blackadar,
{\it Symmetries of the CAR algebra}, Ann. Math. (2) 131 (1990), 589-623.

\bibitem{blackadar 2004} B. Blackadar,
{\it The stable rank of full corners in C*-algebras},
Proc. Amer. Math. Soc. 132 (2004), no. 10, 2945-2950.

\bibitem{Davidson 1996}  K. R. Davidson, 
{\it C*-algebras by example}, 
Fields Institute Monographs, 6. American Mathematical Society, Providence, RI, 1996. xiv+309 pp. 

\bibitem{DMLR 2008} K. R. Davidson, R. H. Levene, L. W. Marcoux and H. Radjavi, 
{\it On the topological stable rank of non-self-adjoint operator algebras}, Math. Ann. 341 (2008), 239-253.

\bibitem{GHJ 1989} F. M. Goodman, P. de la Harpe and V. F. R. Jpnes,
{\it Coxeter graphs and tower of algebras}, Mathmatical Sciences Research Institute Publication, Vol. 14(1989) (Springer-Verlag, New York, Berlin, Heidelberg, London, Paris, Tokyo).

\bibitem{GKPT 2018} T. Giordano, D. Kerr, N. C. Phillips and A. Toms, 
{\it Crossed products of C*-algebras, topological dynamics, and classification}, Lecture notes based on the course held at the Centre de Recerca Matemàtica (CRM) Barcelona, June 14–23, 2011. Edited by Francesc Perera. Advanced Courses in Mathematics. CRM Barcelona. Birkhäuser/Springer, Cham, 2018. x+498 pp.


\bibitem{Hoffman 1962} K. Hoffman, 
{\it Banach spaces of analytic functions}, Prentice-Hall, Inc.,, Engelwood Cliffs, New Jersey, 1962.

\bibitem{JOPT 2009} J. A. Jeong, H. Osaka, N. C. Phillips and T. Teruya,
{\it Cancellation for inclusions of C*-algebras of finite depth}, Indiana Univ. Math. J. 58 (2009), no. 4, 1537-1564.


\bibitem{JHP 2017} M. de Jeu, R. E. Harti and P. R. Pinto,
{\it Amenable crossed product Banach algebras associated with a class of C*-dynamical systems}, Integr. Equ. Oper. theory 87(2017), 169--178

\bibitem{OT 2006} H. Osaka and T. Teruya, 
{\it Topological stable rank of inclusions of unital C*-algebras},  Internat. J. Math. 17(2006), 19 - 34.

\bibitem{OT 2007} H. Osaka and T. Teruya,
{\it Stable rank of depth two inclusions of C*-algebras}, C. R. Math. Acad. Sci. Soc. R. Can. 29 (2007), no. 1, 28–32.

\bibitem{Rieffel 1982} M. A. Rieffel,
{\it Dimension and stable rank in the $K$-theory of C*-algebras},
Proc. London Math. Soc. (3), 46(1983), 301-333.

\bibitem{Rieffel 1987} M. A. Rieffel,
{\it The homotopy groups of the unitary groups of non-commutative tori},
J. Operator Theory 17 (1987), 237-254.




\bibitem{Wata 1990} Y. Watatani,
{\it Index for C*-subalgebras}, Mem. Amer. Math. Soc. 83 (1990), no. 424.

\end{thebibliography}
\end{document}